\documentclass[a4paper,11pt]{article}

\usepackage[T1]{fontenc}
\usepackage[utf8]{inputenc}
\usepackage[DIV=12]{typearea}

\usepackage{amsmath}
\usepackage{amsfonts}
\usepackage{amssymb}
\usepackage{amsthm}
\usepackage{mathtools}
\usepackage{bigints}

\usepackage{csquotes}

\usepackage{braket}
\usepackage{enumerate} 
\usepackage{cancel}
\usepackage{dsfont} 

\usepackage[sc]{mathpazo}
\linespread{1.05}

\usepackage[backend=biber,
            language=english,
            style=alphabetic, 
            firstinits=true,
            maxbibnames=5, 
            doi=false,
            isbn=false,
            url=false
           ]{biblatex}

\addbibresource{literature.bib}

\AtEveryBibitem{%
  \clearfield{pages}%
  \clearfield{month}%
}

\usepackage{hyperref}

\hypersetup{%
    colorlinks=false, hidelinks, bookmarksnumbered,pdfencoding=auto,
    bookmarksopen=true, bookmarksopenlevel=1, unicode,
    pdftitle={Dynamic Inverse Wave Problems -- Part I:\@ Regularity for the Direct Problem}, 
    pdfsubject={Dynamic Inverse Wave Problems -- Part I:\@ Regularity for the Direct Problem}, 
    pdfauthor={Thies Gerken, Simon Grützner},%
    pdfkeywords={Regularity, Hyperbolic, Evolution Equations, Time-dependent, PDE}
}

\usepackage{aliascnt}
\def\NewTheorem#1[#2]#3{%
  \newaliascnt{#1}{#2}
  \newtheorem{#1}[#1]{#3}
  \aliascntresetthe{#1}
  \expandafter\def\csname #1autorefname\endcsname{#3} 
}

\newtheorem{theorem}{Theorem}[section]
\NewTheorem{corollary}[theorem]{Corollary}
\NewTheorem{lemma}[theorem]{Lemma}
\NewTheorem{proposition}[theorem]{Proposition}

\newtheorem*{theorem:main}{Theorem~\ref{theorem:main}}

\theoremstyle{definition}

\newtheorem{example}[theorem]{Example}

\theoremstyle{remark}
\newtheorem{remark}[theorem]{Remark}

\numberwithin{equation}{section}

\DeclareMathOperator*{\esssup}{ess\,sup}

\DeclareMathOperator{\divv}{div}
\DeclareMathOperator*{\compose}{\bigcirc}
\DeclareMathOperator*{\spann}{span}

\newcommand\R{\ensuremath{\mathbb R}}
\newcommand\N{\ensuremath{\mathbb N}}

\newcommand{\subnorm}[2]{{{\left \|#2\right \|}_{#1}}}
\newcommand{\norm}[1]{{\subnorm{{}}{#1}}}
\newcommand{\tnorm}[1]{{\|#1 \|}}

\newcommand\dt{\ensuremath{{\,\mathrm{d}t}}}
\newcommand\ds{\ensuremath{{\,\mathrm{d}s}}}
\newcommand\dx{\ensuremath{{\,\mathrm{d}x}}}

\newcommand\dd{\ensuremath{{\,\mathrm{d}}}}
\newcommand\ddt{\ensuremath{{\frac{\dd}{\dt}\,}}}

\newcommand\scp[2]{{\left({#1},{#2} \right)}}
\newcommand\dup[2]{{\langle{#1},{#2} \rangle}}

\newcommand{\assign}{\coloneqq}

\renewcommand{\phi}{\varphi}
\renewcommand{\epsilon}{\varepsilon}
\newcommand{\eps}{\epsilon}

\setlength{\parindent}{0ex}

\title{Dynamic Inverse Wave Problems -- Part I:\\ Regularity for the Direct Problem} 

\author{Thies Gerken\thanks{Center for Industrial Mathematics, Universit\"at Bremen, Germany; \texttt{tgerken@math.uni-bremen.de}} \and Simon Grützner\thanks{Center for Industrial Mathematics, Universit\"at Bremen, Germany; \texttt{sg@math.uni-bremen.de}}}

\date{\today}

\begin{document}

\maketitle

\begin{abstract}
  For parameter identification problems the Fréchet-derivative of the parameter-to-state map is of particular interest.
  In many applications, e.g.\ in seismic tomography, the unknown quantity is modeled as a coefficient in a linear differential equation, therefore computing the derivative of this map involves solving the same equation, but with a different right-hand side.
  It then remains to show that this right-hand side is regular enough to ensure the existence of a solution. 
  For second-order hyperbolic PDEs with time-dependent parameters the needed results are not as readily available as in the stationary case, especially when working in a variational framework. 
  This complicates for example the reconstruction of a time-dependent density in the wave equation. 
  To overcome this problem we extend the existing regularity results to the time-dependent case.
\end{abstract}


\section{Introduction}

One of the motivations of this work is the identification of the space- and time-dependent mass density $\rho$ from the solution $u$ of the wave equation
\begin{subequations}\label{eq:intro_waveeq}
\begin{gather}
  u'' - \divv \frac{\nabla u}{\rho} = f \ \text{ in $[0,T]\times \Omega$}, \\
  u = 0 \ \text{ in $[0,T]\times \partial \Omega$},\ \ u(0)=u_0\ \text{and}\ u'(0)=u_1.
\end{gather}
\end{subequations}
A common approach is to write this problem as an abstract evolution equation of the form
\begin{subequations}\label{eq:intro_problem}
  \begin{gather}\label{eq:intro_problem_pde}
    u''(t)+A(t)u(t) = f(t)\text{ in } V^*, \text{ a.e.\ in } I,\\ \label{eq:1.1b}
    u(0)=u_0\text{ in } H,\ u'(0)=u_1\text{ in } V^*,
  \end{gather}
\end{subequations}
where $I:=(0,T)$ with $0<T<\infty$, $V=H^1_0(\Omega)$, $H=L^2(\Omega)$ and $A(t)\in \mathcal L(V; V^*)$ is given via
\begin{equation*}
  \dup{A(t)\phi}\psi_{V^*\times V} = \int_\Omega \frac{\nabla\phi \cdot \nabla \psi}{\rho(t)} \dx, \quad \phi,\psi\in V.
\end{equation*}
This abstract problem is then analyzed using semigroup theory~\cite{pazy83} or variational techniques~\cite{zeidler1985, lionsmagenes:bvp}.
Either of these tools yield a well-defined solution $u\in L^2(I; V)\cap H^1(I; H)\cap H^2(I; V^*)$ for a strongly positive $A\in W^{1,\infty}(I; \mathcal L(V; V^*))$, i.e. $\rho \in W^{1,\infty}(I; L^\infty(\Omega))$ with $\rho(t,x) \geq \rho_0 > 0$ for almost all $(t,x)\in I\times \Omega$.
Since our task is to reconstruct $\rho$ from $u$, the parameter-to-state map reads $F:\rho \mapsto u$ and in order to apply Newton-based algorithms for the inversion of $F$, knowledge of its Fréchet derivative is crucial.
Formal derivation of~\eqref{eq:1.1b} shows that if this derivative exists, then $u_h = \partial F(\rho)[h]$ with pertubation $h\in W^{1,\infty}(I; L^\infty(\Omega))$ would have to solve
\begin{equation*}
  u_h''(t)-\divv \frac{\nabla u_h(t)}{\rho(t)} = -\divv \left(h(t) \frac{\nabla u(t)}{\rho(t)^2}\right)
\end{equation*}
in a weak sense and satisfy homogeneous initial conditions.
Without further information about $u$ we only know that the right-hand side of this equation lies in $L^2(I; V^*)$, which (in contrast to elliptic or parabolic equations, see e.g.~\cite{GiTr83,LaEv10,hallerrehberg_maxpara08,arendt07} and the references therein) is not enough to ensure existence of $u_h$.
The equation would be solvable if the right-hand side was an element of $H^1(I; V^*)$, which requires $u\in H^1(I; V)$.
This is the reason why we are interested in obtaining higher regularity of the solution $u$.\\[2ex]
A prominent approach to obtain higher regularity is the formal differentiation of the abstract formulation~\eqref{eq:intro_problem_pde} with respect to time, which yields
\begin{equation*}
  u^{\prime\prime\prime}(t) + A'(t)u(t)+A(t)u'(t) = f'(t),
\end{equation*}
and then treating this (complemented by suitable initial values) as a new problem for $v\assign u'$, c.f. \textcite{JoWl87}. 
The expression $A'(t)u(t)$ is regarded as independent of $v$ and moved to the right-hand side. 
The resulting equation then reads $v''(t)+A(t)v(t) = f'(t)-A'(t)u(t)$. 
Now, to ensure the existence of such a $v$, one needs that $t\mapsto A'(t)u(t)$ is an element of either $L^2(I; H)$ or $H^1(I; V^*)$. 
The latter would require $u\in H^1(I; V)$, which is what we are trying to show in the first place. 
The former can be fulfilled by assuming $A'(t)\in \mathcal L(V; H)$, which is clearly violated in our example due to the time dependence of $\rho$.
Another possible approach is to first show spatial regularity for $u$ and then use integration by parts to see that $A'(t)u(t)$ can indeed be applied to elements of $H$, i.e. $u(t)\in \mathcal D(A'(t))$.
However, this is not possible without using specific knowledge about the Hilbert spaces $H$ and $V$.\\[2ex]
We overcome all of these problems by regarding $u$ as dependent on $v$ by writing $u(t)=u_0+\int_0^t v(s)\ds$. 
This results in a mixed integral and differential equation for $v$, which we analyze using common variational techniques. 
Afterwards we show that $v\in L^2(I; V)\cap H^1(I; H)$ indeed equals $u^\prime$ and therefore $u\in H^1(I; V) \cap H^2(I; H)$. 
By iterating this process we can get even higher regularity for $u$. \\[2ex]
In principle one could tackle this kind of abstract problem using semigroups. 
However, to our knowledge, there are only two similar result in literature. 
One is stated by \textcite{lions_equations_1961} where he uses a variational approach and transforms the time interval from $(0,T)$ to $(0,\infty)$ and then solves the weak formulation in the space-time domain using a weaker form of Lax-Milgram. 
The resulting assumptions regarding the smoothness of the operators are the same, but his theory only applies to homogeneous initial conditions and requires that the first $k-1$ derivatives of the right-hand side vanish at zero. \\[2ex]
The other one is from~\cite{ToKa85}, where the author treats the abstract problem from~\eqref{eq:intro_eq_general} for $C$ set to the identity in $\mathcal{L}(H)$ and to $B,Q=0$. 
The needed compatibility condition agree with ours in this specific setting. 
The author also introduces some additional structure to $V$ and $H$ which then makes some additional abstract spatial regularity result possible.\\[2ex]
The analysis is done on the abstract problem~\eqref{eq:intro_problem} and is therefore not limited to the wave equation. 
It can directly be applied to related phenomena like the elastic wave- or Maxwell's equations.
Following this notion we want to include even more general cases of multiple unknown parameters, or a parameter that appears in more than one coefficient of the PDE.\@
Hence we augment~\eqref{eq:intro_problem} even further by additional time-dependent operators $B, C, Q$ and try to achieve higher differentiability with respect to time of the solution of
\begin{equation}\label{eq:intro_eq_general}
  \ddt (C(t)u'(t)) + B(t)u'(t) + (A(t)+Q(t))u(t) = f(t)\text{ in } V^*, \text{ a.e.\ in } I.
\end{equation}
The operator $Q(t)\in \mathcal L(V; H)$ may include asymmetric parts in the PDE that depend only on first-order spatial derivatives, e.g.\ a transport term $\nabla u(t) \cdot b(t)$.
The operators $B,C$ also generate time-dependent zero order terms after differentiating this equation more than once. 
In contrast to $A$ these expressions can safely be moved to the right-hand side, but we aim to get a single regularity theorem that is applicable to a general equation like~\eqref{eq:intro_eq_general}.
An example for a parameter appearing in multiple operators would be the equation $u''/\rho - \divv(\nabla u / \rho) = f$. 
This is a more realistic version of our introductory PDE, is also used in~\cite{KiRie13} and can be formulated as~\eqref{eq:intro_eq_general} as long as we write the left-hand side as $(u'/\rho)' + (\rho' / \rho^2) u'  - \divv(\nabla u / \rho)$.\\[2ex]
Many articles on parameter identification in hyperbolic PDEs focus on equations with time-independent principal part, e.g.~\cite{KiRie13},~\cite{LeSchl17},~\cite{blazek_mathematical_2013} and~\cite{KiRie16}. 
Our results give the means to treat these problems in a setting where the parameters are time-dependent. 
Thus it is now possible to tackle the corresponding dynamic inverse problems, such as waves propagating with a time-dependent wave speed or density.
These results also benefit some of today's other applications like optimization, see e.g.~\cite{FrTr10} or~\cite{NeiTi94}, or solving quasi-linear PDEs with Newton-like methods, where knowledge of the Fréchet derivative is important as well.\\[2ex]
This article is organized as follows: We begin by stating an existence and uniqueness result for equation~\eqref{eq:intro_eq_general} and our regularity theorem, which we then aim to prove. 
For this, we need solvability of an auxiliary problem, which we analyze in \autoref{section:auxiliary}. 
This then allows for a compact proof of our main theorem in \autoref{section:main_proof}.

\section{Preliminaries and Main Theorem}

Let $V$, $H$ denote real Hilbert spaces where $V$ is separable and the embedding $V\hookrightarrow H$ is dense, so that $V\hookrightarrow H \hookrightarrow V^*$ establishes a \emph{Gelfand triple}. 
The dual space of $V$ is denoted by $V^*$. 
Without loss of generality we assume that $\norm{\cdot}_H \leq \norm{\cdot}_V$. 
For a Banach space $X$ we denote by $W^{k,p}(I; X)$ the usual Bochner space of $X$-valued time-dependent functions (see e.g.~\cite{zeidler1985}).
Furthermore, $\mathcal L(X;Y)$ is the space of linear and continuous operators from $X$ to $Y$, and in the case $X=Y$ we just write $\mathcal L(X)$. 
The inner product of $H$ is denoted by $\scp{\cdot}{\cdot}$, and the duality pairing between $V^*$ and $V$ by $\dup{\cdot}{\cdot}$.

Taking~\eqref{eq:intro_eq_general} complemented by suitable initial conditions then reads
\begin{subequations}\label{eq:problem_general}
  \begin{gather}\label{eq:problem_general_a}
    \ddt (C(t)u'(t))+B(t)u'(t)+(A(t)+Q(t)) u(t) = f(t)\text{ in } V^*, \text{ a.e.\ in } I,\\ \label{eq:problem_general_b}
    u(0)=u_0\text{ in } H,\ (C(\cdot) u'(\cdot))(0)=C(0)u_1\text{ in } V^*,
  \end{gather}
\end{subequations}
where $A\in L^\infty(I; \mathcal L(V; V^*))$, $B,C\in L^\infty(I; \mathcal L(H))$ and $Q\in L^\infty(I; \mathcal L(V; H))$.
The operators $A$ and $C$ are assumed to be pointwise self-adjoint and strongly positive, i.e. $\scp{C(t)\phi}{\phi}\geq c_0 \norm{\phi}^2_H$ and $\dup{A(t)\phi}{\phi} \geq a_0 \norm{\phi}^2_V$ with $a_0>0$ and $c_0 > 0$.
In the case that $A$ only fullfills the weaker Gårding-inequality $\dup{A(t)\phi}{\phi} \geq a_0 \norm{\phi}^2_V - \lambda \norm{\phi}^2_H$ with $\lambda\in\R$ one may remedy this by replacing $A$ with $A+\lambda I$ and $Q$ with $Q-\lambda I$.
For notational simplicity we define the operator
\begin{equation*}
  \mathcal C: L^2(I; H)\to L^2(I; H), \quad (\mathcal Cu)(t) = C(t)u(t),
\end{equation*}
and analogously $\mathcal A$, $\mathcal B$ and $\mathcal Q$. 
This allows us to write $(\mathcal Cu')'(t)$ instead of $(C(\cdot) u'(\cdot))'(t)$ and equation~\eqref{eq:problem_general_a} becomes $(\mathcal Cu')'+\mathcal Bu' + (\mathcal A + \mathcal Q) u = f$ (in the $L^2(I; V^*)$ sense). 
We start by stating suitable conditions for the solvability of this problem.

\begin{lemma}[Well-posedness]\label{theorem:well_posedness}
  Provided that $A\in W^{1,\infty}(I; \mathcal L(V;V^*))$, $B,C\in W^{1,\infty}(I; \mathcal L(H))$, $Q\in W^{1,\infty}(I; \mathcal L(V; H))$, $f\in L^2(I; H)$ or $f\in H^1(I; V^*)$, $u_0\in V$, and $u_1\in H$
  there exists a uniquely determined $u\in L^2(I; V)\cap H^1(I; H)$ with $(\mathcal Cu')' \in L^2(I; V^*)$ solving~\eqref{eq:problem_general}.
\end{lemma}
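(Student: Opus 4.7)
The plan is to prove existence via a Galerkin (Faedo–Galerkin) approximation and then pass to the weak limit, treating uniqueness separately via a standard energy argument. Since $V$ is separable, fix a Hilbert basis $\{w_k\}_{k\in\N}$ of $V$ and let $V_n = \spann\{w_1,\dots,w_n\}$. For each $n$ I look for $u_n(t)=\sum_{k=1}^n g^n_k(t)w_k$ solving, for every $k\le n$,
\begin{equation*}
  \ddt \scp{C(t)u_n'(t)}{w_k} + \scp{B(t)u_n'(t)}{w_k} + \dup{(A(t)+Q(t))u_n(t)}{w_k} = \dup{f(t)}{w_k},
\end{equation*}
with $u_n(0)\to u_0$ in $V$ and $u_n'(0)\to u_1$ in $H$ as $n\to\infty$. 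Since $C(t)$ is self-adjoint and strongly positive on $H$, the mass matrix $M_{jk}(t)=\scp{C(t)w_j}{w_k}$ is invertible with $W^{1,\infty}$-in-$t$ inverse, so this system is a linear ODE in $g^n$ with $L^\infty$ coefficients, uniquely solvable on all of $I$.

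Next I would derive the key a priori estimate. Multiplying the $k$-th equation by $(g^n_k)'(t)$, summing, and exploiting the symmetry of $C(t)$ and $A(t)$, the identities
\begin{equation*}
  \dup{(\mathcal C u_n')'(t)}{u_n'(t)} = \tfrac12 \ddt \scp{C(t)u_n'(t)}{u_n'(t)} + \tfrac12 \scp{C'(t)u_n'(t)}{u_n'(t)},
\end{equation*}
\begin{equation*}
  \dup{A(t)u_n(t)}{u_n'(t)} = \tfrac12 \ddt \dup{A(t)u_n(t)}{u_n(t)} - \tfrac12 \dup{A'(t)u_n(t)}{u_n(t)}
\end{equation*}
turn the tested equation into a differential inequality for $E_n(t) = \scp{C(t)u_n'(t)}{u_n'(t)} + \dup{A(t)u_n(t)}{u_n(t)}$. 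The $B$, $Q$ and $f$ terms are absorbed by Cauchy–Schwarz and Young; in the case $f\in H^1(I;V^*)$ I integrate the $\dup{f}{u_n'}$ term by parts in $t$ first so that only $f$ and $f'$ in the weaker norm appear. After integration in $t$ and Grönwall's lemma the strong positivity of $C$ and $A$ yields
\begin{equation*}
  \norm{u_n}_{L^\infty(I;V)} + \norm{u_n'}_{L^\infty(I;H)} \le K,
\end{equation*}
with $K$ independent of $n$.

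With these bounds I extract a (not relabelled) subsequence such that $u_n\rightharpoonup^* u$ in $L^\infty(I;V)$ and $u_n'\rightharpoonup^* u'$ in $L^\infty(I;H)$. Passing to the limit in the Galerkin equation against a fixed test function from $V_m$ (and noting that $\bigcup_m V_m$ is dense in $V$) identifies $u$ as a solution of $(\mathcal C u')'+\mathcal B u' + (\mathcal A+\mathcal Q)u = f$ in $L^2(I;V^*)$; in particular $(\mathcal C u')'\in L^2(I;V^*)$ follows immediately from the equation and the mapping properties of $A,B,C,Q$. Since $u\in L^2(I;V)\cap H^1(I;H)$ embeds into $C(\overline I;H)$, the condition $u(0)=u_0$ is recovered in the usual way, and since $(\mathcal Cu')'\in L^2(I;V^*)$ and $\mathcal Cu'\in L^2(I;V^*)$ we get $\mathcal C u'\in C(\overline I;V^*)$, so $(\mathcal C u')(0)=C(0)u_1$ is recovered by integrating the equation against a test function in $C^1([0,T];V)$ that vanishes at $T$.

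For uniqueness I set $f=0$, $u_0=u_1=0$ and want to show $u\equiv 0$. The technical obstacle is that $u'$ is in $L^2(I;H)$ but not necessarily admissible as a test function in $V^*$; the remedy is the Ladyzhenskaya test function $\phi(t)=\int_t^s u(\tau)\dd\tau$ for fixed $s\in I$, which lies in $L^2(I;V)$ with $\phi'(t)=-u(t)$. Testing $(\mathcal Cu')'+\mathcal Bu' + (\mathcal A+\mathcal Q)u = 0$ against $\phi$, integrating by parts in $t$ and using the same self-adjointness identities as above produces a quadratic expression in $\|u(s)\|_H$ and $\|\phi(0)\|_V$ controlled by a time integral of the same quantities, and Grönwall closes the argument. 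The main obstacle of the whole proof is the $(\mathcal C u')'$ term: one has to be consistent about whether time derivatives land on $C$ or on $u'$, and the symmetry identities above are what make the energy method work despite the time dependence; the rest is a careful but standard Galerkin construction.
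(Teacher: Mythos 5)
Your proposal is correct and follows essentially the same route as the paper: the paper's own proof defers existence to the Faedo--Galerkin construction of its Section~3 (Corollary~\ref{corollary:aux_solvable} with $k=0$) and to Dautray--Lions, and your Galerkin approximation, energy identity exploiting the self-adjointness of $A$ and $C$, weak-$*$ limit passage, and Ladyzhenskaya-type test function $\phi(t)=\int_t^s u(\tau)\,\mathrm d\tau$ for uniqueness are precisely the ingredients of that standard argument. Your closing observation that the differentiability of $B$ and $Q$ enters only through the integration by parts in the uniqueness step (not in the a priori estimate) matches the remark the authors make in their proof.
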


\begin{proof}
  The proof for the case where the operators are continuously differentiable in time can be found in~\cite{DauLi00E}, but the assertion also holds for $W^{1,\infty}$ because all assumptions on the operators only have to be satisfied up to a set of measure zero. 
  The existence of a solution can also be deduced from the following \autoref{corollary:aux_solvable} by setting $k=0$ in the corresponding proof, which is correct without any differentiability assumptions on $B$ and $Q$. 
  They arise when showing uniqueness of the solution. 
  If $Q\in L^\infty(I; \mathcal L(H))$ then this regularity of $Q$ would not be needed.
\end{proof}

We will make frequent use of the following lemma, which can be seen as a generalization of the product rule.

\begin{lemma}\label{lemma:product rule}
  Let $X, Y$ be separable Banach spaces and $G\in W^{1,\infty}(I; \mathcal L(X; Y^*))$. 
  Given any $u\in H^1(I; X)$, $v\in H^1(I; Y)$ the map $t\mapsto \dup{G(t)u(t)}{v(t)}$ belongs to $W^{1,1}(I)$ and we have
  \begin{equation}\label{eq:productrule2}
    \ddt \dup{G(t)u(t)}{v(t)}_{Y^*\times Y} = \dup{G^\prime(t)u(t)}{v(t)}_{Y^*\times Y} + \dup{G(t)u^\prime(t)}{v(t)}_{Y^*\times Y} + \dup{G(t)u(t)}{v^\prime(t)}_{Y^*\times Y},
  \end{equation}
  which holds for almost all $t\in I$.
\end{lemma}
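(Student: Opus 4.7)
The strategy I would follow is approximation by smooth data. For smooth inputs the identity is just the classical product rule in Banach spaces, and one then passes to the limit to recover the general case.

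First I would construct approximations. Since $X$ and $Y$ are separable Banach spaces, standard density results for Bochner--Sobolev spaces yield $u_k \in C^1(\bar I; X)$ with $u_k \to u$ in $H^1(I; X)$ and $v_k \in C^1(\bar I; Y)$ with $v_k \to v$ in $H^1(I; Y)$. For $G$ I would exploit that any $W^{1,\infty}$-function admits a Lipschitz continuous representative on $\bar I$, extend it by constants past the endpoints, and convolve pointwise with a scalar mollifier. This produces $G_k \in C^\infty(\bar I; \mathcal L(X; Y^*))$ with $G_k \to G$ uniformly on $\bar I$, with $G_k' \to G'$ in $L^p(I; \mathcal L(X; Y^*))$ for every $p<\infty$, and with $\norm{G_k}_\infty$ and $\norm{G_k'}_\infty$ uniformly bounded by $\norm{G}_{W^{1,\infty}}$.

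Next, for the smooth triple $(G_k,u_k,v_k)$ the formula~\eqref{eq:productrule2} (with the obvious substitutions) follows pointwise by a direct difference-quotient argument: writing
\begin{equation*}
  \dup{G_k(t+h)u_k(t+h)}{v_k(t+h)} - \dup{G_k(t)u_k(t)}{v_k(t)}
\end{equation*}
as a telescoping sum of three differences, dividing by $h$, and letting $h\to 0$ using continuity of the trilinear evaluation $(G,x,y)\mapsto \dup{Gx}{y}$, one obtains that $t\mapsto \dup{G_k(t)u_k(t)}{v_k(t)}$ lies in $C^1(\bar I)$ with derivative equal to the analogue of the right-hand side of~\eqref{eq:productrule2}.

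The final step is the limit passage. The Sobolev embedding $H^1(I; Z)\hookrightarrow C(\bar I; Z)$ gives $u_k\to u$ in $C(\bar I; X)$ and $v_k\to v$ in $C(\bar I; Y)$, so $t\mapsto \dup{G_k(t)u_k(t)}{v_k(t)}$ converges uniformly to $t\mapsto \dup{G(t)u(t)}{v(t)}$. For the right-hand side I would show $L^1(I)$-convergence of each of the three terms: the $G_k'$-term combines $G_k'\to G'$ in $L^2$ with uniform convergence of $u_k$ and $v_k$ and the uniform $L^\infty$-bound on $G_k'$; the two remaining terms use uniform convergence of $G_k$ (and of $v_k$, resp.\ $u_k$) together with $L^2$-convergence of $u_k'$ (resp.\ $v_k'$). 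Since a sequence of $C^1$-functions converging uniformly with $L^1$-convergent derivatives has a $W^{1,1}$-limit whose weak derivative is the limit of the derivatives, the claim follows. The main obstacle is the smooth approximation of $G$, since $\mathcal L(X; Y^*)$ need not be separable and Bochner integration of $G$ thus requires care; selecting the Lipschitz representative and convolving with a scalar kernel (so that the mollified values stay in the range of the original curve) is the cleanest way around it.
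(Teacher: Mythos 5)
Your proposal is correct and follows essentially the same strategy as the paper's proof: approximate by smooth data, apply the classical product rule, and pass to the limit (the paper tests against a scalar test function rather than invoking closedness of the weak derivative under uniform plus $L^1$ convergence, but this is the same mechanism). In fact you supply more detail than the paper on the reduction to smooth $G$ --- the paper simply asserts that it suffices to treat $G\in C^1(I;\mathcal L(X;Y^*))$, whereas your mollification of the Lipschitz representative, with the observation that the mollified values remain in a separable subspace, justifies that step explicitly.
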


\begin{proof}
  We only need to show that the assertion holds for $G\in C^1(I; \mathcal L(X; Y^*))$.
  It is clear that both the right-hand side of~\eqref{eq:productrule2} and the mapping $t\mapsto \dup{G(t)u(t)}{v(t)}_{Y^*\times Y}$ belong to $L^1(I)$.
  Due to dense embeddings there exist $u^\eps\in C^\infty(I; X)$, $v^\eps\in C^\infty(I; Y)$ such that $u^\eps \to u$ in $H^1(I; X)$ and $v^\eps\to v$ in $H^1(I; Y)$ when $\epsilon\to 0$.
  Due to the chain rule we have
  \begin{equation*}
    \ddt \dup{G(t)u^\eps(t)}{v^\eps(t)} = \dup{G^\prime(t)u^\eps(t)}{v^\eps(t)} + \dup{G(t){(u^\eps)}^\prime(t)}{v^\eps(t)} + \dup{G(t)u^\eps(t)}{{(v^\eps)}^\prime(t)}
  \end{equation*}
  in the classical sense. 
  In particular,
  \begin{align}
    -\int_0^T &\phi^\prime(t) \dup{G(t)u^\eps(t)}{v^\eps(t)} \dt \notag \\
    &= \int_0^T \phi(t) \left[\dup{G^\prime(t)u^\eps(t)}{v^\eps(t)} + \dup{G(t){(u^\eps)}^\prime(t)}{v^\eps(t)} + \dup{G(t)u^\eps(t)}{{(v^\eps)}^\prime(t)}\right]\dt. \label{eq:productrule1}
  \end{align}
  Both sides of this equation converge to the respective terms evaluated at $u$ and $v$ when $\eps\to0$.
\end{proof}

We follow the usual technique of showing that derivatives of $u$ are themselves solutions of evolution equations.
Formally differentiating~\eqref{eq:problem_general} $k$ times with respect to $t$ leads to
\begin{subequations}
\begin{align}
    f^{(k)} &= (\mathcal C u^{(k+1)})'+ (k\mathcal C'+\mathcal B)u^{(k+1)}+ \left(\mathcal A + \mathcal Q + k\mathcal B' + \frac{k(k+1)}2 \mathcal C''\right) u^{(k)} \notag \\
    &\ \ + \sum_{j=1}^{k} \left[{k \choose j} (\mathcal A^{(j)} + \mathcal Q^{(j)})  + {k \choose j+1} \mathcal B^{(j+1)} + {k+1 \choose j+2}\mathcal C^{(j+2)}\right] u^{(k-j)}, \label{eq:problem_general_aux_u_eq}
\end{align}
in the sense of $L^2(I; V^*)$, together with initial values
\begin{equation}
u^{(k)}(0)=u_k\text{ in } H,\ (\mathcal Cu^{(k+1)})(0)=C(0)u_{k+1}\text{ in } V^*,
\end{equation}
\end{subequations}
which are given recursively for $k\geq 0$ through
\begin{align}
  C(0)u_{k+2} &= f^{(k)}(0) - ((k+1) C'(0) + B(0)) u_{k+1} \notag \\
  &\ \ - \sum_{j=0}^{k} \left[ {k \choose j} (A^{(j)}(0) + Q^{(j)}(0)) + {k \choose j+1} B^{(j+1)}(0) +  {k+1 \choose j+2} C^{(j+2)}(0)\right] u_{k-j},  \label{eq:def_initial_values}
\end{align}
starting from already given $u_0$ and $u_1$. 
Note that these compatibility conditions resemble those from Theorem 30.1 in~\cite{JoWl87} if we were to choose $C$ to be the identity in $\mathcal{L}(H)$, $B,Q=0$, and $A$ as time-independent. \\[2ex]
Our intention is to interpret~\eqref{eq:problem_general_aux_u_eq} as a second-order evolution equation for $u^{(k)}$ without moving lower order derivatives of $u$ to the right-hand side. 
To this end, for some Banach space $X$ and $g\in X$ we introduce the operator $R_{X,g}\colon L^2(I; X) \to H^{1}(I; X)$ via
\begin{equation}\label{eq:2.4}
  (R_{X,g}v)(t):=g+\int_0^t v(s)\,\ds.
\end{equation}
In view of an efficient notation, we abbriviate the operator $R_{V\!,g}$ by $R_g$ if not explicitly stated otherwise. 
We also agree on the definition and notation of the following operator, which denotes the consecutive composition of $R_{X,u_l}$ for $l=m,\dots,n$, i.e.,
\begin{equation}\label{eq:2.5}
  \compose_{l=m}^{n}R_{X,u_l}v :=\left \{\begin{array}{rl} (R_{X,u_m}\circ\cdots\circ R_{X,u_n})v, & m\le n \\ v, & \textnormal{else.} \end{array}\right. 
\end{equation}
Writing $v\assign u^{(k)}$ now leads to the auxiliary problem
\begin{subequations}\label{eq:problem_general_aux}
\begin{align}\label{eq:problem_general_aux_eq}
    f^{(k)} &= (\mathcal C v')'+ (k\mathcal C'+\mathcal B)v'+(\mathcal A+\mathcal Q)v  + \left( k\mathcal B' + \frac{k(k+1)}2 \mathcal C''\right) v \\
    &\ \ + \sum_{j=1}^{k} \left[{k \choose j} (\mathcal A^{(j)}+\mathcal Q^{(j)}) + {k \choose j+1} \mathcal B^{(j+1)} + {k+1 \choose j+2}\mathcal C^{(j+2)}\right] \compose_{l=k-j}^{k-1} R_{u_l} v, \notag
\end{align}
equipped with initial values
\begin{equation}\label{eq:problem_general_aux_ic}
v(0)=u_k\text{ in } H,\ (\mathcal Cv')(0)=C(0)u_{k+1}\text{ in } V^*.
\end{equation}
\end{subequations}
Note that the left-hand side of~\eqref{eq:problem_general_aux_eq} is nonlinear in $v$ because $R_g$ is affine linear for $g\neq 0$.
Also, the dependence of~\eqref{eq:problem_general_aux_eq} and~\eqref{eq:def_initial_values} on $C^{(k+2)}$ and $B^{(k+1)}$ is merely notational, because the coefficients in front of them vanish.
It is also worth mentioning that we explicitly wrote the zeroth index of the sum in~\eqref{eq:problem_general_aux_eq} since we need strong positivity properties from $\mathcal A$ and $\mathcal C$. \\[2ex]
The main assertion of this article is the following regularity result, which we are going to prove by showing that solutions to~\eqref{eq:problem_general_aux} exist and that they have to be equal to $u^{(k)}$.

\begin{theorem}\label{theorem:main}
  Let $k\in\N\cup \{0\}$, $l=\max \{1,k\}$ and suppose that $A\in W^{k+1,\infty}(I; \mathcal L(V;V^*))$, $Q\in W^{l,\infty}(I; \mathcal L(V;H))$, $B\in W^{l,\infty}(I; \mathcal L(H))$, $C\in W^{k+1,\infty}(I; \mathcal L(H))$, as well as
  $f\in H^k(I; H)$ or $f\in H^{k+1}(I; V^*)$, and $u_j\in V$ for $j=0,\dots, k$, $u_{k+1}\in H$.
  Then the unique solution $u$ of problem~\eqref{eq:problem_general} lies in $H^k(I; V)\cap H^{k+1}(I; H)$ with $(\mathcal C u^{(k+1)})' \in L^2(I; V^*)$ and satisfies the energy estimate
  \begin{equation}\label{eq:theorem_main_energy}
  \norm{u}_{W^{k,\infty}(I; V)}^2 + \tnorm{u^{(k+1)}}_{L^{\infty}(I; H)}^2 \leq \Lambda \left(\sum_{j=0}^k \norm{u_j}_V^2 + \norm{u_{k+1}}_H^2 + \norm{f}^2 \right),
  \end{equation}
  where $f$ is measured in either the $H^k(I; H)$- or the $H^{k+1}(I; V^*)$ norm and $\Lambda=\Lambda(k)$ is a constant depending continuously on $1/{c_0}$, $1/{a_0}$, $T$ and the operators $A, B, C, Q$, measured in the spaces above.
\end{theorem}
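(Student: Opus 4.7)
The plan is to invoke the solvability of the auxiliary problem~\eqref{eq:problem_general_aux} --- the content of \autoref{section:auxiliary} --- to obtain a function $v\in L^2(I;V)\cap H^1(I;H)$ with $(\mathcal Cv')'\in L^2(I;V^*)$ satisfying \eqref{eq:problem_general_aux_eq}--\eqref{eq:problem_general_aux_ic}, together with an energy estimate whose right-hand side matches~\eqref{eq:theorem_main_energy}. The regularity hypothesized on $A$, $B$, $C$, $Q$, $f$ and on $u_0,\dots,u_{k+1}$ is exactly what is needed to recursively define, through~\eqref{eq:def_initial_values}, all the initial data entering \eqref{eq:problem_general_aux_ic} and to feed the auxiliary existence result. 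Once $v$ is in hand, the whole proof reduces to proving that $v=u^{(k)}$, where $u$ denotes the unique solution of~\eqref{eq:problem_general} furnished by \autoref{theorem:well_posedness}.

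For this identification I would set $\tilde u\assign \compose_{l=0}^{k-1} R_{u_l} v$. Straight from the definition of $R_{V\!,g}$ in~\eqref{eq:2.4} and of the composition in~\eqref{eq:2.5}, the function $\tilde u$ lies in $H^k(I;V)\cap H^{k+1}(I;H)$, fulfils $\tilde u^{(j)}(0)=u_j$ for $j=0,\dots,k$, and has $\tilde u^{(k)}=v$. The core computation is that~\eqref{eq:problem_general_aux_eq} is precisely what one obtains by differentiating~\eqref{eq:problem_general_a} $k$ times in time, using \autoref{lemma:product rule} and binomial bookkeeping, with lower-order derivatives of $\tilde u$ rewritten as $\compose_{l=k-j}^{k-1} R_{u_l} v$; undoing this manipulation shows that $\tilde u$ satisfies~\eqref{eq:problem_general_a} in the sense of $L^2(I;V^*)$. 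The initial conditions in~\eqref{eq:problem_general_b} hold by construction of $\tilde u$ combined with the recursion~\eqref{eq:def_initial_values}, and the regularity $(\mathcal C\tilde u')'\in L^2(I;V^*)$ needed to place $\tilde u$ in the uniqueness class of \autoref{theorem:well_posedness} follows from $(\mathcal Cv')'\in L^2(I;V^*)$ via further applications of \autoref{lemma:product rule}. The uniqueness clause of \autoref{theorem:well_posedness} then forces $\tilde u=u$, hence $u^{(k)}=v$.

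With $u^{(k)}=v$ established, the regularity $u\in H^k(I;V)\cap H^{k+1}(I;H)$ with $(\mathcal Cu^{(k+1)})'\in L^2(I;V^*)$ is immediate, and the energy estimate~\eqref{eq:theorem_main_energy} is obtained from the corresponding bound on $v$ by controlling each $u^{(k-j)}=\compose_{l=k-j}^{k-1}R_{u_l}v$ through its integral representation; the $\norm{u_j}_V$-terms appearing this way are already on the right-hand side of~\eqref{eq:theorem_main_energy}, and all operator norms get absorbed into the constant $\Lambda(k)$. I expect the main obstacle to be the bookkeeping needed to verify that~\eqref{eq:problem_general_aux_eq} really is the $k$-th time derivative of~\eqref{eq:problem_general_a} in integrated form --- that the binomial coefficients line up correctly and that the compatibility conditions~\eqref{eq:def_initial_values} are exactly what is required for $\tilde u$ to inherit the initial values of $u$. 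Once that is in place, the existence half of the argument is delivered by the auxiliary theorem and the uniqueness half by \autoref{theorem:well_posedness}, and~\eqref{eq:theorem_main_energy} follows from the auxiliary energy estimate.
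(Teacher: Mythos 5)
Your proposal is correct and follows essentially the same route as the paper: existence for the auxiliary problem~\eqref{eq:problem_general_aux}, identification of its solution $v$ with $u^{(k)}$ by integrating back and invoking uniqueness, and the energy estimate inherited from~\eqref{eq:aux_energy}. The only structural difference is that you telescope the descent --- applying $\compose_{l=0}^{k-1}R_{u_l}$ all at once and appealing to the uniqueness of the original problem from \autoref{theorem:well_posedness} --- whereas the paper integrates one level at a time (\autoref{lemma:inductive}) inside an induction that carries uniqueness of each auxiliary problem along; the computational core (that one integration step plus the compatibility conditions~\eqref{eq:def_initial_values} recovers the equation one level down) is identical in both versions.
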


We would like to give an example for the application of this regularity result to a partial differential equation. 
In particular, we are interested in showing what the conditions for the $u_i$ entail in practice. 
For simplicity we use the wave equation with a time- and space-dependent coefficient in the divergence term. 

\begin{example}
Suppose $\Omega\subset \R^n$ is a bounded domain.
The problem
\begin{align*}
  u''(t,x) - \divv(a(t,x) \nabla u(t,x)) &= f(t,x) \text{ for all } (t,x)\in I\times\Omega, \\
  u =0\text{ on }I\times\partial\Omega ,\ u(0,\cdot)&=u_0, \ u'(0, \cdot)=u_1,
\end{align*}
possesses a unique weak solution $u\in L^2(I; H^1_0(\Omega))\cap H^1(I; L^2(\Omega)) \cap H^2(I; H^{-1}(\Omega))$
(according to \autoref{theorem:well_posedness}) if $u_0\in H^1_0(\Omega)$, $u_1\in L^2(\Omega)$, $f\in L^2(I; L^2(\Omega))$ and $a\in W^{1,\infty}(I; L^\infty(\Omega))$,
provided that $a(t,x) \geq a_0 > 0$ almost everywhere in $I\times \Omega$.
Applying~\autoref{theorem:main} to this problem yields $u\in H^k(I; H^1_0(\Omega))\cap H^{k+1}(I; L^2(\Omega)) \cap H^{k+2}(I; H^{-1}(\Omega))$ if $f\in H^k(I; L^2(\Omega))$ and $a\in W^{k+1,\infty}(I; L^\infty(\Omega))$.
Additionally the initial values defined in~\eqref{eq:def_initial_values} have to satisfy $u_i\in H^1_0(\Omega)$ ($i=0, \dots, k$) and $u_{k+1}\in L^2(\Omega)$. 
These can be seen as \emph{compatibility conditions} and can be fulfilled by either one of the following assumptions:
\begin{enumerate}[(i)]
  \item Homogeneous initial data $u_0=u_1=0$ and $f^{(i)}(0) = 0$ for $i=0, \dots, k-1$. 
  In this case the regularity result is the same as obtained by Lions~\cite{lions_equations_1961}.
  \item $f^{(k-j)}(0)\in H_0^{j-1}(\Omega)$, $a^{(k-j)}(0)\in W^{j,\infty}(\Omega)$ for $j=1, \dots, k$ and smooth initial values $u_0\in H^{k+1}_0(\Omega)$ and $u_1\in H^k_0(\Omega)$.
  \begin{proof}[Proof (Sketch).]
    Use integration by parts to see that e.g.
    \begin{equation*}
      A(0)g = v\mapsto \int_\Omega a(0,x) \nabla v(x) \nabla g(x) \dx = -\divv (a(0,\cdot) \nabla g)
    \end{equation*}
    (where the last equality is to be understood in the distributional sense) is not only an element of $H^{-1}(\Omega)$, but also lies in $H^k_0(\Omega)$ if $a(0)\in W^{k+1,\infty}(\Omega)$ and $g\in H^{k+2}_0(\Omega)$.
    From this we conclude that $u_k$ lies in $H^l_0(\Omega)$ if $f^{(k-2)}(0)\in H^l_0(\Omega)$, $a^{(j)}(0)\in W^{l+1, \infty}(\Omega)$ and $u_j\in H^{l+2}_0(\Omega)$ for all $j=0, \dots, k-2$.
    This is satisfied for $k\geq 2$ if $f^{(k-j)}(0)\in H^{l+j-2}_0(\Omega)$, $a^{(k-j)}(0)\in W^{l+j-1, \infty}(\Omega)$ $u_j\in H^{l+2}_0(\Omega)$, $u_0\in H^{l+k}_0(\Omega)$
    and $u_1\in H^{l+k-1}_0(\Omega)$ by induction over $k\geq 2$, because the assumptions for $k$ imply the assumptions for all $j=0,\dots, k-2$ with smoothness $l+2$. 
    Therefore $u_j\in H^{l+2}_0(\Omega)$ for $j=0,\dots, k-2$ holds. \\
    The assertion follows by applying this to $u_k\in H^1_0(\Omega)$, $u_{k+1}\in L^2(\Omega)$.
  \end{proof}
\end{enumerate}
\end{example}

\section{Well-posedness of the auxiliary problems}\label{section:auxiliary}

We need solvability of the auxiliary problems~\eqref{eq:problem_general_aux} for $k\in\N$. 
They can be written in the form
\begin{subequations}\label{eq:problem_general_aux_form}
  \begin{gather} \label{eq:problem_general_aux_form_a}
    (\mathcal C v')'+\mathcal Bv'+(\mathcal A+\mathcal Q)v + \sum_{i=1}^k (\mathcal D_i+\mathcal E_i) (R_0^i v) = f\text{ in } L^2(I; V^*), \\ \label{eq:problem_general_aux_form_b}
    v(0)=v_0\text{ in } H,\ (\mathcal Cv')(0)=v_1\text{ in } V^*,
  \end{gather}
\end{subequations}
with different $B$, $Q$, $f$ and suitably defined $D_i, E_i$, where $D_i(t)\in \mathcal L(V; V^*)$ while $E_i(t)\in \mathcal L(V; H)$. 
We analyze this equation and then transfer the results to~\eqref{eq:problem_general_aux}.
To this end, we start by discretizing~\eqref{eq:problem_general_aux_form} in space by taking an orthogonal basis $(\phi_i)_{i\in \N}\subset V$ of $V$ wich is also an orthonormal basis in $H$ and introduce the finite-dimensional subspaces $V_m:=\spann \{\phi_i;\,i=1,\dots,m\}$.
We set $v_m:=\sum_{i=1}^{m}\alpha_i(t)\phi_i$ where $\alpha_i\in W^{2,1}(I)$, so that $v_m\in W^{2,1}(I; V)$. 
We want $v_m$ to solve~\eqref{eq:problem_general_aux_form_a} in $L^2(I; V_m^*)$. 
Testing this equation with $\phi_i$ shows that this is equivalent to
\begin{subequations}\label{eq:3.3}
\begin{multline}\label{eq:3.3a}
  \langle f(t),\varphi_i\rangle =\sum_{j=1}^m\left(\left(\alpha_j'(t)\left( C(t)\varphi_j,\varphi_i\right)\right)'  \right.\\
    + \alpha_j'(t)\left(B(t)\varphi_j,\varphi_i\right) + \alpha_j(t)\left(\langle A(t)\varphi_j,\varphi_i\rangle + (Q(t)\varphi_j,\varphi_i)\right)\\
      \left. + \sum_{l=1}^k (R_{\R,0}^l\alpha_j)(t)\left(\langle D_l(t)\varphi_j,\varphi_i\rangle + (E_l(t)\varphi_j,\varphi_i)\right)\right)
\end{multline}
being satisfied for all $i=1,\dots,m$. 
This $m\times m$-system is complemented by the projected initial values from~\eqref{eq:problem_general_aux_form_b}, that is
\begin{equation}\label{eq:3.3b}
  \scp{v_m(0)}{\phi_i} = \scp{v_0}{\phi_i}, \quad \scp{C(0)v_m'(0)}{\phi_i} = \scp{v_1}{\phi_i},\ \ \text{for } i=1,\dots, m.
\end{equation}
\end{subequations}
These ordinary differential equations are commonly referred to as \emph{Galerkin equations} and their solvability is the subject of the following lemma.

\begin{lemma}[Galerkin solutions]\label{lemma:Galerkin solutions}
  Provided that $A\in W^{1,\infty}(I; \mathcal L(V;V^*))$, $C\in W^{1,\infty}(I; \mathcal L(H))$, $B\in L^\infty(I; \mathcal L(H))$, $Q\in L^\infty(I; \mathcal L(V;H))$, $D_i\in W^{1,\infty}(I; \mathcal L(V;V^*))$, $E_i\in L^\infty(I; \mathcal L(V; H))$ ($i=1, \dots, k$),
    $f\in L^2(I; H)$ or $f\in H^1(I; V^*)$, $v_0\in V$ and $v_1\in H$ the Galerkin equations~\eqref{eq:3.3a},~\eqref{eq:3.3b} admit one unique solution $v_m$.
\end{lemma}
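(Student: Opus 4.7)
The plan is to rewrite the Galerkin system as an affine linear Volterra integro-differential equation on $\R^m$ for the coefficient vector $\alpha = (\alpha_1, \dots, \alpha_m)^T$ and then invoke a fixed-point argument. First, I would introduce the symmetric time-dependent matrix $M(t)\in\R^{m\times m}$ with entries $M_{ij}(t) = \scp{C(t)\phi_j}{\phi_i}$, the analogously defined matrices $\tilde A,\tilde B, \tilde Q, \tilde D_l, \tilde E_l$, and the vector $F(t)\in\R^m$ with $F_i(t) = \dup{f(t)}{\phi_i}$. In these terms \eqref{eq:3.3a} becomes
\begin{equation*}
  (M\alpha')'(t) + \tilde B(t)\alpha'(t) + (\tilde A+\tilde Q)(t)\alpha(t) + \sum_{l=1}^k (\tilde D_l+\tilde E_l)(t)(R_{\R,0}^l\alpha)(t) = F(t),
\end{equation*}
an identity in $\R^m$ that has exactly the structural form of \eqref{eq:problem_general_aux_form_a}, but with matrices acting on $\alpha$.

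Since $(\phi_i)$ is orthonormal in $H$ and $C$ is strongly positive, one has $\xi^T M(t)\xi \geq c_0 |\xi|^2$ for all $\xi\in\R^m$ and a.e.\ $t\in I$, so $M(t)$ is uniformly positive definite with $M, M^{-1} \in W^{1,\infty}(I;\R^{m\times m})$. Expanding $(M\alpha')' = M\alpha'' + M'\alpha'$ and multiplying by $M^{-1}$ yields an explicit formula for $\alpha''$ in terms of $\alpha$, $\alpha'$ and a Volterra memory contribution. Setting $y\assign(\alpha,\alpha')^T$ then recasts the problem as a first-order Volterra integro-differential system
\begin{equation*}
  y'(t) = \mathcal K(t) y(t) + (\mathcal V y)(t) + G(t), \qquad y(0) = y_0,
\end{equation*}
with $\mathcal K \in L^\infty(I;\R^{2m\times 2m})$, source $G \in L^1(I;\R^{2m})$ (or $L^2$ if $f\in L^2(I;H)$), and a causal linear Volterra operator $\mathcal V$ encoding the iterated integrals $R_{\R,0}^l\alpha$. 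The initial datum $y_0$ is determined by $\alpha_i(0) = \scp{v_0}{\phi_i}$ and by solving the invertible linear system $M(0)\alpha'(0) = (\scp{v_1}{\phi_i})_{i=1}^m$.

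I would then solve this system by Picard iteration on $C([0,T];\R^{2m})$ equipped with a weighted norm $|y|_\lambda \assign \sup_{t\in[0,T]} e^{-\lambda t}|y(t)|$: for $\lambda$ sufficiently large (depending on the $L^\infty$-norms of the coefficient matrices and on $T$), the associated integral operator becomes a strict contraction and produces a unique $y\in C([0,T];\R^{2m})$. Substituting back into the ODE shows $y'\in L^1(I;\R^{2m})$ (respectively $L^2$), hence $\alpha \in W^{2,1}(I;\R^m)$, which is exactly the regularity in which $v_m$ was sought; by construction $v_m$ then satisfies both \eqref{eq:3.3a} and the projected initial conditions \eqref{eq:3.3b}, and uniqueness is inherited from the fixed-point argument.

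The only mildly subtle ingredient is bookkeeping for the memory term: one has to verify that $\mathcal V$ is a bounded causal operator on $C([0,T];\R^{2m})$ whose Lipschitz contribution can be absorbed by the weight $e^{-\lambda t}$. This is routine because each $R_{\R,0}^l$ maps $L^\infty$ into itself with norm at most $T^l/l!$, so I do not anticipate any serious obstacle beyond careful accounting; everything else reduces to classical linear ODE theory in finite dimensions.
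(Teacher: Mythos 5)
Your argument is correct, but it diverges from the paper's proof in how the memory terms are handled. The paper also reduces \eqref{eq:3.3a} to a finite-dimensional system via the product rule of \autoref{lemma:product rule} and the invertibility of the mass matrix $(\scp{C(t)\phi_j}{\phi_i})_{ij}$, but instead of keeping the iterated integrals $R_{\R,0}^l\alpha$ as a Volterra operator it promotes them to additional state variables $\gamma^l\assign R_{\R,0}^l\alpha$, $l=0,\dots,k$, obtaining a purely local linear first-order system of dimension $(k+2)m$ whose unique solvability follows directly from Carath\'eodory theory for linear ODEs with $L^\infty$ coefficients and integrable source. This augmentation works precisely because the memory kernel here is degenerate (iterated integration), so no fixed-point argument is needed at all. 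Your route -- keeping a $2m$-dimensional state and running a Picard iteration in a Bielecki-weighted norm on $C([0,T];\R^{2m})$ -- is equally valid and self-contained, and would generalize to non-degenerate Volterra kernels, at the cost of having to verify the contraction estimate for the causal operator $\mathcal V$ (which you correctly note is routine, the bound $T^l/l!$ on $R_{\R,0}^l$ being the key point). All the supporting claims in your write-up check out: the uniform positive definiteness $\xi^T M(t)\xi\geq c_0\abs{\xi}^2$ does follow from the $H$-orthonormality of the basis and the strong positivity of $C$, the recovery of $\alpha'\in W^{1,1}$ from $(M\alpha')'\in L^1$ uses the invertibility and Lipschitz regularity of $M$, and the resulting $\alpha\in W^{2,1}(I)$ is exactly the regularity class in which $v_m$ was sought. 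One minor remark: you do not need $M^{-1}\in W^{1,\infty}$, only $M^{-1}\in L^\infty$, which is slightly cheaper to justify.
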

\begin{proof}
  The idea is to transform eq.~\eqref{eq:3.3a} into a system of linear differential equations in the sense of Caratheodory (see~\cite{CoLe55}, Chapter 2, Theorem 1.1, e.g.). 
  To this end, we apply the generalized product rule from \autoref{lemma:product rule} to~\eqref{eq:3.3a} to get
  \begin{multline}\label{eq:3.4}
    \langle f(t),\varphi_i\rangle =\sum_{j=1}^m\bigg(\alpha_j''(t)\left( C(t)\varphi_j,\varphi_i\right)  + \alpha_j'(t)\left((B(t) + C'(t))\varphi_j,\varphi_i\right)  + \alpha_j(t)\left(\langle A(t)\varphi_j,\varphi_i\rangle\right. \\
     + \left(Q(t)\varphi_j,\varphi_i)\right) + \sum_{l=1}^k (R_{\R,0}^l\alpha_j)(t)\left(\langle D_l(t)\varphi_j,\varphi_i\rangle + (E_l(t)\varphi_j,\varphi_i)\right)\bigg)
  \end{multline}
  and then introduce the following matrix-valued functions $\boldsymbol{M}_l \in L^{\infty}(I; \R^{m\times m})$ for $l=-2,\dots,k$ component-wise via
  %
  \begin{gather*}
    (\boldsymbol{M}_{-2}(t))_{ij}:=(C(t)\phi_j,\phi_i),\quad (\boldsymbol{M}_{-1}(t))_{ij}:= \left((C'(t)+B(t))\varphi_j,\varphi_i\right), \\
    (\boldsymbol{M}_0(t))_{ij}:= \langle A(t)\varphi_j,\varphi_i\rangle + (Q(t)\varphi_j,\varphi_i), \quad (\boldsymbol{M}_l(t))_{ij}:=\langle D_l(t)\varphi_j,\varphi_i\rangle + (E_l(t)\varphi_j,\varphi_i)
  \end{gather*}
  for $i,j=1,\dots,m$ as well as for $l\ge 2$
  \begin{equation*}
    (\boldsymbol{M}(t))_{l-2+i,l-2+j}:=(\boldsymbol{M}_l(t))_{ij}
  \end{equation*}
  %
  for $i,j=1,\dots,m$. 
  We also introduce the vector-valued function $\boldsymbol{F}\in W^{1,\infty}(I; \R^{m})$ via $\boldsymbol{F}_i(t):=\langle f(t),\varphi_i\rangle$ for $i=1\dots,m$
  and agree on a new set of variables $\gamma^l:= R_{\R,0}^l\alpha$ for $l=0,\dots,k$, $\gamma=(\gamma^l)_{l=0,\dots,k}$ and $\beta=\alpha'$. 
  This allows rewriting~\eqref{eq:3.4} as a $(k+2)m\times (k+2)m$ system of first-order differential equations, i.e
  \begin{equation*}
    \begin{pmatrix}
      \boldsymbol{M}_{-2}(t) & 0 \\
      0              & \boldsymbol{I}_{(k+1)m}
    \end{pmatrix}
      \begin{pmatrix}
        \beta \\
        \gamma
      \end{pmatrix}'=
        \begin{pmatrix}
          \boldsymbol{M}_{-1}(t) & \boldsymbol{M}(t) \\
          \boldsymbol{I}_m & 0 \\
          0 & \boldsymbol{I}_{km}
        \end{pmatrix}
          \begin{pmatrix}
            \beta \\
            \gamma
          \end{pmatrix}+
            \begin{pmatrix}
              \boldsymbol{F}(t) \\
              0
            \end{pmatrix},
  \end{equation*}
  which is complemented by initial values
  \begin{equation*}
    \boldsymbol{M}_{-2}(0)\beta(0)=\left( \scp{v_1}{\phi_i} \right)_{i=1,\dots, m},\ \ \gamma^0(0) = \left( \scp{v_0}{\phi_i} \right)_{i=1,\dots, m}, \ \ \gamma^l(0)=0 \quad\text{for }l=1,\dots,k.
  \end{equation*}
  Here, $I_m$, $I_{km}$ and $I_{(k+1)m}$ represent identity matrices in $\R^{m\times m}$, $\R^{km \times km}$ and $\R^{(k+1)m\times (k+1)m}$, respectively. 
  Since the $\phi_i$ form an orthogonal basis for $V_m$ and $C(t)$ is invertible due to its coercivitiy, $\boldsymbol{M}_{-2}$ and therefore the whole matrix on the left-hand side are invertible as well.
  It is then straight forward to show the existence of a unique solution $(\beta,\gamma)\in W^{1,\infty}(I; \R^{(k+2)m})$ following~\cite{ORe97} or~\cite{CoLe55} and hence finding a uniquely determined $v_m:=\sum_{j=1}^m\alpha_j(t)\phi_j\in W^{2,\infty}(I; V_m)$ that solves~\eqref{eq:3.3}.
\end{proof}

Our goal is now to establish an upper bound for $v_m$ independent of $m\in\N$.

\begin{lemma}[A priori estimate]\label{lemma:a priori estimate}
    Provided that $A\in W^{1,\infty}(I; \mathcal L(V;V^*)), C\in W^{1,\infty}(I; \mathcal L(H))$, $B\in L^\infty(I; \mathcal{L}(H))$, $Q\in L^\infty(I; \mathcal L(V;H))$, $D_i\in W^{1,\infty}(I; \mathcal L(V;V^*))$, $E_i\in L^\infty(I; \mathcal L(V; H))$ for $i=1, \dots, k$, as well as
    $f\in L^2(I; H)$ or $f\in H^1(I; V^*)$, $v_0\in, V$ and $v_1\in H$ the Galerkin solution $v_m$ satisfies the energy estimate
    \begin{equation}\label{eq:energy}
      \esssup_{t\in I} \left(\norm{v_m(t)}^2_V + \norm{v_m'(t)}^2_H\right) \leq \Lambda \left(\norm{v_0}_V^2 + \norm{v_1}_H^2 + \norm{f}^2 \right),
    \end{equation}
    where $f$ is measured in either the $L^2(I; H)$- or the $H^1(I; V^*)$ norm and $\Lambda>0$ is some constant depending continuously on $1/{c_0}$, $1/{a_0}$, $T$, and the operators $A, B, C, Q, (D_i)_{i=1,\dots, k}, (E_i)_{i=1,\dots, k}$, measured in the spaces above.
\end{lemma}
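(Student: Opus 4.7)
The plan is to apply the classical energy method. Since $v_m\in W^{2,\infty}(I;V_m)$ I can test~\eqref{eq:3.3a} against $v_m'(t)\in V_m$, which yields
\begin{equation*}
  \dup{(\mathcal Cv_m')'(t)}{v_m'(t)} + \scp{B(t)v_m'(t)}{v_m'(t)} + \dup{(A(t)+Q(t))v_m(t)}{v_m'(t)} + \sum_{i=1}^k \dup{(D_i(t)+E_i(t))(R_0^i v_m)(t)}{v_m'(t)} = \dup{f(t)}{v_m'(t)}.
\end{equation*}
Self-adjointness of $A$ and $C$ together with~\autoref{lemma:product rule} let me rewrite the two principal terms as
\begin{equation*}
  \dup{(\mathcal Cv_m')'}{v_m'} = \tfrac12\ddt\scp{Cv_m'}{v_m'} + \tfrac12\scp{C'v_m'}{v_m'}, \qquad \dup{\mathcal Av_m}{v_m'} = \tfrac12\ddt\dup{Av_m}{v_m} - \tfrac12\dup{A'v_m}{v_m},
\end{equation*}
so that $A'$ and $C'$ only show up as bounded lower-order perturbations, which is what makes the $W^{1,\infty}$ hypothesis on $A$ and $C$ suffice.

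Integrating over $(0,t)$, strong positivity of $A$ and $C$ delivers the lower bound $c_0\norm{v_m'(t)}_H^2 + a_0\norm{v_m(t)}_V^2$ on the left-hand side. The initial contribution $\scp{C(0)v_m'(0)}{v_m'(0)}$ is controlled by $\norm{v_1}_H^2/c_0$ (test~\eqref{eq:3.3b} against $v_m'(0)\in V_m$ and apply coercivity), while $\norm{v_m(0)}_V\leq\norm{v_0}_V$ follows from the simultaneous orthogonality of the basis. The remaining contributions from $C'$, $A'$, $\mathcal B$ and $\mathcal Q$ are bounded pointwise by $\norm{v_m}_V^2 + \norm{v_m'}_H^2$ up to operator norms, and the integral terms satisfy $\norm{(R_0^i v_m)(s)}_V \leq T^{i-1/2}\norm{v_m}_{L^2(0,s;V)}$, producing only double integrals of $\norm{v_m}_V^2$ which remain compatible with Gronwall. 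For the forcing I distinguish two cases: if $f\in L^2(I;H)$ I estimate $\int_0^t\scp{f}{v_m'}\ds$ directly by Cauchy--Schwarz and Young; otherwise I integrate by parts in time,
\begin{equation*}
  \int_0^t\dup{f(s)}{v_m'(s)}\ds = \dup{f(t)}{v_m(t)} - \dup{f(0)}{v_m(0)} - \int_0^t\dup{f'(s)}{v_m(s)}\ds,
\end{equation*}
bound the boundary term using $H^1(I;V^*)\hookrightarrow C(\overline I;V^*)$ by $\epsilon\norm{v_m(t)}_V^2 + C_\epsilon\norm{f}_{H^1(I;V^*)}^2$, and absorb the $\epsilon$-term into $a_0\norm{v_m(t)}_V^2$ by choosing $\epsilon$ small. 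Collecting everything results in an inequality
\begin{equation*}
  \phi(t) \leq C_1\bigl(\norm{v_0}_V^2 + \norm{v_1}_H^2 + \norm{f}^2\bigr) + C_2\int_0^t\phi(s)\ds, \qquad \phi(t)\assign\norm{v_m(t)}_V^2 + \norm{v_m'(t)}_H^2,
\end{equation*}
to which Gronwall's inequality applies and yields~\eqref{eq:energy} with a bound independent of $m$.

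The argument itself is standard; the real work lies in the bookkeeping needed to show that $\Lambda$ depends \emph{continuously} on $1/c_0$, $1/a_0$, $T$ and the operator norms, as claimed. In particular, in the $f\in H^1(I;V^*)$ case the Young absorption step requires $\epsilon<a_0$, so $\Lambda$ inevitably carries a factor of $1/a_0$, while the $R_0^i$-terms contribute factors of $T^{i-1/2}\norm{D_i}_\infty$ and $T^{i-1/2}\norm{E_i}_\infty$ that thread through $C_1$ and $C_2$ but do not spoil the Gronwall structure. No new ideas beyond the energy method are needed, but the constants must be shepherded through carefully.
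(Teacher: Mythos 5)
Your overall strategy (test with $v_m'$, rewrite the principal terms via \autoref{lemma:product rule} and self-adjointness, integrate, distinguish the two cases for $f$, absorb with Young, close with Grönwall) is exactly the paper's, and your treatment of the $C'$, $A'$, $\mathcal B$, $\mathcal Q$, $E_i$ and forcing terms is fine. However, there is one genuine gap: your handling of the $D_i$ terms does not work as stated. You claim that the terms $\int_0^t \dup{D_i(s)(R_0^i v_m)(s)}{v_m'(s)}\ds$ ``produce only double integrals of $\norm{v_m}_V^2$'' using only $\norm{D_i}_{L^\infty}$. But $D_i(s)\in\mathcal L(V;V^*)$, so the only available pointwise bound is
\begin{equation*}
  \abs{\dup{D_i(s)(R_0^i v_m)(s)}{v_m'(s)}} \leq \norm{D_i}\,\tnorm{(R_0^i v_m)(s)}_V\,\norm{v_m'(s)}_V,
\end{equation*}
and $\norm{v_m'(s)}_V$ is precisely the quantity that the energy does \emph{not} control --- the left-hand side only gives you $\norm{v_m(s)}_V^2+\norm{v_m'(s)}_H^2$. (This is the hyperbolic, as opposed to parabolic, difficulty; for the $E_i$ terms your argument is correct because $E_i(s)\in\mathcal L(V;H)$ lets you pair against $v_m'$ in $H$.) Note also that the lemma assumes $D_i\in W^{1,\infty}(I;\mathcal L(V;V^*))$ while only $L^\infty$ is asked of $E_i$; your proposal never uses $D_i'$, which is a sign the extra regularity hypothesis has gone unexploited.

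The missing step is an integration by parts in time for exactly these terms: using \autoref{lemma:product rule},
\begin{equation*}
  -\int_0^t \dup{D_i(s)(R^i_0 v_m)(s)}{v_m'(s)}\ds = -\dup{D_i(t)(R^i_0 v_m)(t)}{v_m(t)} + \int_0^t \dup{D_i'(s)(R^i_0 v_m)(s)}{v_m(s)} - \dup{D_i(s)(R^{i-1}_0 v_m)(s)}{v_m(s)}\ds,
\end{equation*}
after which every pairing involves only $v_m$ (not $v_m'$) in $V$; the boundary term is then absorbed via Young's inequality into $a_0\norm{v_m(t)}_V^2$ alongside the $f\in H^1(I;V^*)$ boundary term, at the cost of a factor $1/a_0$ and the appearance of $\norm{D_i'}$ in $\Lambda$. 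With this repair your argument coincides with the paper's proof.
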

\begin{proof}
  We take $v_m\in W^{2,\infty}(I; V_m)$ to be the unique solution to the Galerkin equations from the previous lemma, test~\eqref{eq:problem_general_aux_form_a} with $v_m'(t)\in V_m$ and see that
  \begin{align*}
      \scp{(\mathcal C v_m')'(t)}{v_m'(t)} &+ \scp{B(t)v_m'(t)}{v_m'(t)}+ \dup{A(t)v_m(t)}{v_m'(t)} + \scp{Q(t)v_m(t)}{v_m'(t)} \\
      &+ \sum_{i=0}^k \dup{D_i(t) (R_0^i v_m)(t)}{v_m'(t)} + \sum_{i=0}^k \scp{E_i(t) (R_0^i v_m)(t)}{v_m'(t)} = \dup{f(t)}{v_m'(t)}
  \end{align*}
  holds for almost all $t\in I$. 
  We make use of the product rule from \autoref{lemma:product rule} and the fact that $A(t)$ and $C(t)$ are self-adjoint to get
  \begin{align*}
    2\scp{(\mathcal C v_m')'(t)}{v_m'(t)} &= 2\scp{C(t) v_m''(t) + C'(t)v_m'(t)}{v_m'(t)} = \scp{C'(t)v_m'(t)}{v_m'(t)} + \Big( \scp{C(t) v_m'(t)}{v_m'(t)}\Big)', \\
    2\dup{A(t) v_m(t)}{v_m'(t)} &= \Big(\dup{A(t) v_m(t)}{v_m(t)}\Big)' - \dup{A'(t)v_m(t)}{v_m(t)}.
  \end{align*}
  Together with $\norm{v_m(0)}_V \leq \norm {v_0}_V$, $\norm{C(0)v_m'(0)}_H \leq \norm{v_1}_H$ this gives
  \begin{align}
    a_0 &\norm{v_m(t)}^2_V + c_0 \norm{v_m'(t)}^2_H \notag \\
    &\leq \tfrac 1{c_0} \norm{v_1}_H^2 + \norm{A}_{W^{1,\infty}(I; \mathcal L(V;V^*))} \norm{v_0}_V^2 \label{eq:energy_general1} + \bigintsss_0^t \Big( \scp{C(s) v_m'(s)}{v_m'(s)} + \dup{A(s) v_m(s)}{v_m(s)}\Big)' \ds \notag \displaybreak[0]\\
    &= \tfrac 1{c_0} \norm{v_1}_H^2 + \norm{A}_{W^{1,\infty}(I; \mathcal L(V;V^*))} \norm{v_0}_V^2 + 2\bigintsss_0^t \scp{(\mathcal C v_m')'(s)}{v_m'(s)}  + \dup{A(s) v_m(s)}{v_m'(s)} \ds \notag \\
    & \ \quad -  \bigintsss_0^t \scp{C'(s)v_m'(s)}{v_m'(s)} + \dup{A'(s)v_m(s)}{v_m(s)} \ds \notag \displaybreak[0]\\
    &\leq \tfrac 1{c_0} \norm{v_1}_H^2 + \norm{A}_{W^{1,\infty}(I; \mathcal L(V;V^*))} \norm{v_0}_V^2 + 2\bigintsss_0^t \dup{f(s)}{v_m'(s)} - \scp{Q(s)v_m(s)}{v_m'(s)}  \ds \notag \displaybreak[0] \\
    & \ \quad - 2\bigintsss_0^t \scp{B(s)v_m'(s)}{v_m'(s)} + \sum_{i=0}^k \dup{D_i(s) (R_0^i v_m)(s)}{v_m'(s)} + \dup{E_i(s) (R_0^i v_m)(s)}{v_m'(s)} \ds \notag \\
    & \ \quad - \bigintsss_0^t \scp{C'(s)v_m'(s)}{v_m'(s)} + \dup{A'(s)v_m(s)}{v_m(s)} \ds.
  \end{align}
  Since we are looking to apply Grönwall's inequality, we have to modify the right-hand side so that it only depends on $\norm{v_m(s)}_V^2$ and $\norm{v_m'(s)}^2_H$. 
  We perform this individually for each summand.\\[2ex]
  If $f\in L^2(I; H)$ then $2\dup{f(s)}{v_m'(s)} \leq \norm{f(s)}^2_H + \norm{v_m'(s)}^2_H$. 
  Otherwise, i.e. $f\in H^1(I; V^*)$, we have
  \begin{align*}
    2 \int_0^t \dup{f(s)}{v_m'(s)} \ds &= 2 \dup{f(t)}{v_m(t)}  -  2 \dup{f(0)}{v_m(0)} - 2\int_0^t \dup{f'(s)}{v_m(s)} \ds \\
    &\leq  \epsilon \norm{v_m(t)}^2_V  + \tfrac {1+\epsilon}\epsilon \norm{f}_{H^1(I; V^*)}^2 + \norm{v_0}_V^2 + \int_0^t \norm{f'(s)}_{V^*}^2 + \norm{v_m(s)}^2_V \ds,
  \end{align*}
  which holds for all $\epsilon >0$.\\[2ex]
  For the last summand on the right-hand side we immediately see that
  \begin{multline}
    -\bigintsss_0^t \scp{C'(s)v_m'(s)}{v_m'(s)} + \dup{A'(s)v_m(s)}{v_m(s)} \ds \\
      \leq \norm{C'} \bigintsss_0^t \norm{v_m'(s)}^2_H \ds + \norm{A'} \bigintsss_0^t \norm{v_m(s)}^2_V \ds,
  \end{multline}
  the expression with $B$ can be handled analogously.\\[2ex]
  For terms including $D_i$, we have to use integration by parts because in contrast to parabolic equations we cannot expect $\norm{v_m'(s)}_V$ to be bounded during the limit process $m\to\infty$, which is already apparent on the left-hand side of~\eqref{eq:energy_general1}. 
  In employing~\autoref{lemma:product rule} we see that
  \begin{align*}
    -\bigintsss_0^t &\dup{D_i(s)(R^i_0 v_m)(s)}{v_m'(s)} \ds = -\dup{D_i(t)(R^i_0 v_m)(t)}{v_m(t)} \\
    & \qquad \quad + \bigintsss_0^t \dup{D_i'(s)(R^i_0 v_m)(s)}{v_m(s)} - \dup{D_i(s)(R^{i-1}_0 v_m)(s)}{v_m(s)} \ds \displaybreak[0]\\
    &\leq \norm{D_i}_{L^\infty(I; \mathcal L(V; V^*))} \tnorm{(R^i_0 v_m)(t)}_V \norm{v_m(t)}_V \\
    & \qquad \quad + \bigintsss_0^t \norm{D_i'} \tnorm{(R^i_0 v_m)(s)}_V \norm{v_m(s)}_V + \norm{D_i'} \tnorm{(R^{i-1}_0 v_m)(s)}_V \norm{v_m(s)}_V \ds \displaybreak[0]\\
    &\leq T^{i-1} \norm{D_i}_{L^\infty(I; \mathcal L(V; V^*))} \norm{v_m(t)}_V \bigintsss_0^t \norm{v_m(s)}_V \ds   \\
    & \qquad \quad + (T^i \norm{D_i'} + T^{i-1} \norm{D_i}) \bigintsss_0^t \norm{v_m(s)}_V^2 \ds \displaybreak[0]\\
    &\leq  \epsilon \norm{v_m(t)}_V^2 + \left(\frac{T^{2i-1} \norm{D_i}^2}{4\epsilon} + T^i \norm{D_i'} + T^{i-1} \norm{D_i} \right) \bigintsss_0^t \norm{v_m(s)}^2_V \ds,
  \end{align*}
  holds again for some arbitrary $\epsilon>0$. 
  Here, we made use of
  \begin{equation*}
    \int_0^t \tnorm{(R^{i}_0 v_m)(s)} \norm{v_m(s)} \ds \leq T^i \int_0^t \norm{v_m(s)}^2 \ds.
  \end{equation*}
  The fact that $E_i(t), Q(t)\in \mathcal L(V; H)$ allows for
  \begin{equation*}
    - 2 \bigintsss_0^t \dup{Q(s)v_m(s)}{v_m'(s)} \ds \leq \norm{Q}_{L^\infty(I; \mathcal L(V;H))} \bigintsss_0^t \norm{v_m(s)}^2_V + \norm{v_m'(s)}^2_H \ds
  \end{equation*}
  for $Q$ and
  \begin{equation*}
    - 2 \bigintsss_0^t \dup{E_i(s)(R^i_0 v_m)(s)}{v_m'(s)} \ds \leq \norm{E_i}_{L^\infty(I; \mathcal L(V;H))} T^i \bigintsss_0^t \norm{v_m(s)}^2_V + \norm{v_m'(s)}^2_H \ds
  \end{equation*}
  for $E_i$.
  If we insert all these intermediate computations into~\eqref{eq:energy_general1} and choose e.g. $\epsilon = a_0/2$ we arrive at an inequality of the form
  \begin{equation*}
    \norm{v_m(t)}^2_V + \norm{v_m'(t)}^2_H \leq \tilde\Lambda \left(\norm{v_0}_V^2 + \norm{v_1}_H^2 + \norm{f}^2 + \bigintsss_0^t \norm{v_m(s)}^2_V + \norm{v_m'(s)}^2_H \ds \right),
  \end{equation*}
  where $f$ is measured in either the $L^2(I; H)$- or the $H^1(I; V^*)$ norm and $\tilde\Lambda$ is a constant as stated in the assertion. 
  By applying Grönwall's Lemma we conclude
  \begin{equation*}
    \norm{v_m(t)}^2_V + \norm{v_m'(t)}^2_H \leq \tilde\Lambda \left(\norm{v_0}_V^2 + \norm{v_1}_H^2 + \norm{f}^2 \right) \exp(\tilde\Lambda T)
  \end{equation*}
  for almost all $t\in I$.
\end{proof}

As a consequence of the previous result the norms of $v_m\in V_m$ in $L^2(I; V)$ and $H^{1}(I; H)$ are bounded uniformly with respect to $m\in \N$. 
This guarantees existence of a subsequence $v_m$ (also denoted with $m$) converging weakly in $L^2(I; V)\cap H^{1}(I; H)$, whose weak limit we denote by $v$. 
The next step is to show that this weak limit solves the auxiliary problem~\eqref{eq:problem_general_aux_form}.

\begin{lemma}\label{lemma:solvability_general_aux_form}
  For $A\in W^{1,\infty}(I; \mathcal L(V;V^*)), C\in W^{1,\infty}(I; \mathcal L(H))$, $B\in L^{\infty}(I; \mathcal L(H))$, $D_i\in W^{1,\infty}(I; \mathcal L(V;V^*))$, $E_i\in L^\infty(I; \mathcal L(V;H))$ where $i=1, \dots, k$, $f\in L^2(I; H)$ or $f\in H^1(I; V^*)$, $v_0\in V$ and $v_1\in H$ exists $v\in L^2(I; V)\cap H^1(I; H)$ with $(\mathcal Cv')' \in L^2(I; V^*)$ that solves~\eqref{eq:problem_general_aux_form} and also satisfies the energy estimate~\eqref{eq:energy}.
\end{lemma}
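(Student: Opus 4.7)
The plan is to combine the a priori bound of \autoref{lemma:a priori estimate} with a standard Galerkin compactness argument and then identify the weak limit as a solution.

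First I would use the uniform energy estimate~\eqref{eq:energy} to extract (without relabelling) a subsequence $(v_m)$ such that $v_m \rightharpoonup v$ in $L^2(I;V)$ and $v_m' \rightharpoonup v'$ in $L^2(I;H)$. The limit therefore lies in $L^2(I; V)\cap H^1(I; H)$, and weak lower semicontinuity of the norms immediately yields the energy estimate~\eqref{eq:energy} for $v$ from the one for $v_m$.

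Next I would pass to the limit in the Galerkin equation. Fix $n\in\N$, a test function $\phi\in C^\infty_c(I)$, and some $w\in V_n$. For every $m\ge n$ we multiply~\eqref{eq:3.3a} (expanded via the product rule~\eqref{eq:3.4}) by $\phi(t)$, integrate over $I$, and shift the derivative on $(\mathcal C v_m')'$ onto $\phi$ using integration by parts. Because $R_0^i$ is linear and continuous from $L^2(I;V)$ into $H^1(I;V)$, each operator $\mathcal D_i R_0^i$ and $\mathcal E_i R_0^i$ is weakly continuous, so every term passes to the limit and we obtain
\begin{equation*}
 \int_I \phi(t)\bigl\langle (\mathcal C v')'(t)+\mathcal B v'(t)+(\mathcal A+\mathcal Q)v(t)+\sum_{i=1}^k(\mathcal D_i+\mathcal E_i)(R_0^iv)(t),w\bigr\rangle\dt = \int_I \phi(t)\langle f(t),w\rangle \dt,
\end{equation*}
where $(\mathcal C v')'$ is initially interpreted as a distribution. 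Since $\phi$ and $w\in V_n$ are arbitrary and $\bigcup_n V_n$ is dense in $V$, this identity holds for all $w\in V$ and almost every $t\in I$. Reading off the equation, $(\mathcal C v')'$ equals an element of $L^2(I;V^*)$, so $(\mathcal C v')' \in L^2(I;V^*)$ and~\eqref{eq:problem_general_aux_form_a} is satisfied.

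The main obstacle is the identification of the initial values in~\eqref{eq:problem_general_aux_form_b}. For $v(0)=v_0$, note that $v\in H^1(I;H)\hookrightarrow C(\bar I; H)$, so the trace is well-defined; I would test~\eqref{eq:problem_general_aux_form_a} against $\psi\in C^1(\bar I; V)$ with $\psi(T)=0$, integrate by parts twice in time, do the same for the Galerkin equation, and compare the boundary contributions at $t=0$. Since $v_m(0)$ is the $H$-projection of $v_0$ onto $V_m$ (which converges to $v_0$ in $V$) and $(\mathcal Cv_m')(0)$ is prescribed so that $\langle(\mathcal Cv_m')(0),\phi_i\rangle=(v_1,\phi_i)$ for $i\le m$, passing to the limit in these boundary terms forces $v(0)=v_0$ in $H$ and $(\mathcal Cv')(0)=v_1$ in $V^*$. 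This boundary-identification step is the one that requires the most care, because $\mathcal C v'$ only lies in the weak class $\{g\in L^2(I;H):g'\in L^2(I;V^*)\}$ and its trace at $0$ must be interpreted accordingly through the integration-by-parts formula supplied by \autoref{lemma:product rule}.
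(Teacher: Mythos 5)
Your proposal is correct and follows essentially the same route as the paper: both pass to the weak limit of the Galerkin sequence bounded by \autoref{lemma:a priori estimate}, identify the limit equation using the weak continuity of the (affine) linear operators including $R_0^i$, and then recover the initial conditions from the projected data in~\eqref{eq:3.3b}. The only cosmetic difference is in the last step, where you compare boundary terms after a double integration by parts against $\psi\in C^1(\bar I;V)$ with $\psi(T)=0$, whereas the paper invokes the continuity of the trace functional $g\mapsto (g(0),\psi)_H$ on $H^1(I;H)$ together with the strong convergence $v_m(0)\to v_0$ and $(\mathcal Cv_m')(0)\to v_1$; both devices are standard and equally valid here.
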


\begin{proof}
  We take the weakly converging Galerkin sequence $v_m$ from before and plug them into~\eqref{eq:problem_general_aux_form_a}. 
  Rearranging this equation into
  \begin{equation*}
    (\mathcal{C}v_m')'=f - \mathcal{B}v_m' - (\mathcal{A} + \mathcal{Q})v_m - \sum_{i=1}^k(\mathcal{D}_i + \mathcal{E}_i)R_0^i v_m
  \end{equation*}
  shows that, according to the assumptions made beforehand, the right-hand side is in $L^2(I; V^*)$, particularly, this shows the right-hand side to be \emph{weak-weak continuous} in $v_m$. 
  Hence, $v_m''$ converges weakly in $L^2(I; V^*)$, i.e.,
  \begin{equation*}
    (\mathcal{C}v_m')'\rightharpoonup f - \mathcal{B}v' - (\mathcal{A} + \mathcal{Q})v - \sum_{i=1}^k(\mathcal{D}_i + \mathcal{E}_i)R_0^i v.
  \end{equation*}
  Trivially, this implies $(Cv_m')'\rightharpoonup (Cv')'$ in $L^2(I; V^*)$, too, thus proving to solve~\eqref{eq:problem_general_aux_form_a}. 
  We are left to show that $v$ satisfies the initial conditions~\eqref{eq:problem_general_aux_form_b}. 
  By design of~\eqref{eq:3.3b}, we have
  \begin{equation}\label{eq:3.9}
    v_m(0) \to v_0\text{ in }H\text{ for }m\to\infty, \quad (\mathcal{C}v_m')(0) \to v_1\text{ in }H\text{ for }m\to\infty.
  \end{equation}
  Since $W^{1,2}(I; H)\subset C^0(I; H)$ as well as $g\mapsto (g(0),\psi)_H \in C^0(I; H)^*$ for all $\psi\in H$ hold, respectively, we infer
  \begin{equation*}
    (v_m(0),\psi)_H \to (v(0),\psi)_H,\quad \text{ for all } \psi\in H
  \end{equation*}
  showing $v_m(0)\rightharpoonup v(0)=v_0$ in $H$. 
  Together with~\eqref{eq:3.9} we obtain strong convergence in $H$, such that $v(0)=v_0$ holds (even in $V$). 
  An analogous argument then shows $(\mathcal{C}v')(0)=v_1$ in $V^*$ when keeping $C(0)\in\mathcal{L}(H)$ in mind. 
  This completes the proof.
\end{proof}

We are now in a position to apply this existence theorem to our auxiliary problem~\eqref{eq:problem_general_aux}.

\begin{corollary}\label{corollary:aux_solvable}
    Let $k\in\N$. 
    Suppose $A\in W^{k+1,\infty}(I; \mathcal L(V;V^*))$, $Q\in W^{k,\infty}(I; \mathcal L(V;H))$, $B\in W^{k,\infty}(I; \mathcal L(H))$,
    $C\in W^{k+1,\infty}(I; \mathcal L(H))$, $f\in H^k(I; H)$ or $f\in H^{k+1}(I; V^*)$, $u_j\in V$ for $j=0,\dots, k$ and $u_{k+1}\in H$.
    Then there exists $v\in L^2(I; V)\cap H^1(I; H)$ with $(\mathcal Cv')' \in L^2(I; V^*)$ that solves~\eqref{eq:problem_general_aux} and also satisfies the energy estimate
    \begin{equation}\label{eq:aux_energy}
      \esssup_{t\in I} \left(\norm{v(t)}^2_V + \norm{v'(t)}^2_H\right) \leq \Lambda \left(\sum_{j=0}^k \norm{u_j}_V^2 + \norm{u_{k+1}}_H^2 + \tnorm{f^{(k)}}^2 \right)
    \end{equation}
    where $f^{(k)}$ is measured in either the $L^2(I; H)$- or the $H^{1}(I; V^*)$ norm and $\Lambda=\Lambda(k)$ is a constant depending continuously on $1/{c_0}$, $1/{a_0}$, $T$ and the operators $A, B, C, Q$, measured in the spaces above.
\end{corollary}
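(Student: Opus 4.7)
The plan is to rewrite the auxiliary problem~\eqref{eq:problem_general_aux} in the form~\eqref{eq:problem_general_aux_form} already handled by \autoref{lemma:solvability_general_aux_form}, by splitting the nested compositions $\compose_{l=k-j}^{k-1} R_{u_l} v$ into a data-dependent polynomial tail and the pure $R_0^j v$ piece, and then moving the tails to the right-hand side. A short induction on $j$ (just iterating the definition~\eqref{eq:2.4} of $R_{u_l}$) yields the identity
\begin{equation*}
  \Big(\compose_{l=k-j}^{k-1} R_{u_l} v\Big)(t) = \sum_{i=0}^{j-1} \frac{t^i}{i!}\, u_{k-j+i} + (R_0^j v)(t) \assigns p_j(t) + (R_0^j v)(t),
\end{equation*}
where $p_j\in C^\infty(\overline I; V)$ is a polynomial in $t$ whose coefficients live in $V$ by assumption on $u_0,\dots,u_{k-1}$.

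\textbf{Setting up the reduced problem.} Substituting this identity into~\eqref{eq:problem_general_aux_eq} and regrouping, I would define
\begin{equation*}
  \tilde{\mathcal B} \assign k\mathcal C' + \mathcal B, \qquad \tilde{\mathcal Q} \assign \mathcal Q + k\mathcal B' + \tfrac{k(k+1)}{2}\mathcal C'',
\end{equation*}
split each bracketed coefficient in~\eqref{eq:problem_general_aux_eq} as $\mathcal D_j + \mathcal E_j$ with $\mathcal D_j \assign \binom{k}{j}\mathcal A^{(j)}$ mapping into $V^*$ and the remaining $\mathcal Q^{(j)}$, $\mathcal B^{(j+1)}$, $\mathcal C^{(j+2)}$ pieces collected in $\mathcal E_j$ mapping into $H$, and finally set
\begin{equation*}
  \tilde f \assign f^{(k)} - \sum_{j=1}^k (\mathcal D_j + \mathcal E_j)\,p_j, \qquad v_0 \assign u_k, \qquad v_1 \assign C(0) u_{k+1}.
\end{equation*}
A derivative count using $A,C\in W^{k+1,\infty}$ and $B,Q\in W^{k,\infty}$ then shows that $A$, $C$, $\tilde{\mathcal B}$, $\tilde{\mathcal Q}$, $\mathcal D_j$ and $\mathcal E_j$ satisfy the hypotheses of \autoref{lemma:solvability_general_aux_form}; the two potentially dangerous terms $B^{(k+1)}$ and $C^{(k+2)}$ do not matter because their binomial coefficients $\binom{k}{k+1}$ and $\binom{k+1}{k+2}$ vanish, as already remarked after~\eqref{eq:problem_general_aux_ic}. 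Since $p_j$ is polynomial in $t$ with $V$-valued coefficients, each $(\mathcal D_j + \mathcal E_j) p_j$ lies in $H^1(I;V^*)\cap L^\infty(I;H)$, so $\tilde f$ inherits the regularity of $f^{(k)}$, i.e.\ $\tilde f\in L^2(I;H)$ in the first case and $\tilde f\in H^1(I;V^*)$ in the second.

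\textbf{Applying the existence result and the energy estimate.} Invoking \autoref{lemma:solvability_general_aux_form} on the reduced problem then produces a $v\in L^2(I;V)\cap H^1(I;H)$ with $(\mathcal C v')'\in L^2(I;V^*)$ solving~\eqref{eq:problem_general_aux} and satisfying~\eqref{eq:energy}. It remains to convert that estimate into~\eqref{eq:aux_energy}: using $\norm{p_j(t)}_V \le \sum_{i=0}^{j-1}\tfrac{T^i}{i!}\norm{u_{k-j+i}}_V$ together with the $L^\infty$- and $W^{1,\infty}$-bounds on $\mathcal D_j$ and $\mathcal E_j$, one absorbs the tail contributions into
\begin{equation*}
  \norm{\tilde f}^2 \le C\Big(\tnorm{f^{(k)}}^2 + \sum_{j=0}^{k-1} \norm{u_j}_V^2\Big),
\end{equation*}
with $C$ depending continuously on $T$ and on the operator norms listed in the statement. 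Combined with $\norm{v_0}_V = \norm{u_k}_V$ and $\norm{v_1}_H \le \norm{C(0)}_{\mathcal L(H)}\norm{u_{k+1}}_H$, this yields~\eqref{eq:aux_energy}.

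\textbf{Where the work is.} There is no analytic obstacle beyond \autoref{lemma:solvability_general_aux_form} itself; the real work is combinatorial bookkeeping. One has to track which derivatives of $A,B,C,Q$ appear in each summand, check that every factor sits in the class ($W^{1,\infty}$ for $V\to V^*$, $L^\infty$ for $V\to H$) required by \autoref{lemma:solvability_general_aux_form}, and verify that the polynomial tails in $p_j$ can be bounded in exactly the combination $\sum_{j=0}^{k-1}\norm{u_j}_V^2$ that appears on the right of~\eqref{eq:aux_energy}, with no leftover $\norm{u_{k-1}}_H$-type terms.
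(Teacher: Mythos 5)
Your argument is essentially identical to the paper's proof: the paper performs exactly the same reduction (the same $\tilde{\mathcal B}$, $\tilde{\mathcal Q}$, $\mathcal D_j=\binom{k}{j}\mathcal A^{(j)}$, $\mathcal E_j$ collecting the $\mathcal Q^{(j)},\mathcal B^{(j+1)},\mathcal C^{(j+2)}$ pieces, and the same polynomial tails absorbed into $\tilde f$) and then invokes \autoref{lemma:solvability_general_aux_form}. The one inaccuracy is your claim that $(\mathcal D_j+\mathcal E_j)p_j\in L^\infty(I;H)$ --- the $\mathcal D_j p_j$ part is only $V^*$-valued since $A^{(j)}(t)\in\mathcal L(V;V^*)$ --- but this is harmless (and the paper is no more explicit here), because that term lies in $W^{1,\infty}(I;V^*)\subset H^1(I;V^*)$ and can be fed into the $H^1(I;V^*)$ branch of the energy estimate while $f^{(k)}$ is treated by whichever branch applies to it.
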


\begin{proof}
    To obtain~\eqref{eq:problem_general_aux_form} from~\eqref{eq:problem_general_aux}, we set
    \begin{align*}
      \tilde B &= B+C', &\tilde Q &= Q + kB' + \frac{k(k+1)}2 C'',\\
      D_j &= {k \choose j} A^{(j)}, &E_j &= {k+1 \choose j+2} C^{(j+2)} + {k \choose j+1} B^{(j+1)} + {k \choose j} Q^{(j)},
    \end{align*}
    for $j=1,\dots,k$. 
    The operators $A$ and $C$ are left as-is, and the initial values are $u_k$ and $u_{k+1}$ as defined in~\eqref{eq:def_initial_values}. 
    The right-hand side $\tilde f$ consists of $f^{(k)}$ and those parts of
    \begin{equation*}
      \left(\compose_{l=k-j}^{k-1} R_{u_l} v\right)(t) = (R_0^j v)(t) + \sum_{l=k-j}^{k-1} (R_0^{k-1-l} u_l)(t) = (R_0^j v)(t) + \sum_{l=k-j}^{k-1} u_l \frac{t^{k-1-l}}{(k-1-l)!}
    \end{equation*}
    that are independent of $v$, i.e.,
    \begin{equation*}
      \tilde f(t) = f^{(k)}(t) - \sum_{j=1}^k (D_j(t)+E_j(t)) \sum_{l=k-j}^{k-1} u_l \frac{t^{k-1-l}}{(k-1-l)!}.
    \end{equation*}
    The assertion follows by utilizing \autoref{lemma:solvability_general_aux_form}.
\end{proof}

\section{Proof of the main theorem}\label{section:main_proof}

We have seen that solutions to the auxiliary problems~\eqref{eq:problem_general_aux} exist, but have yet to show that they are linked to~\eqref{eq:problem_general}. 
As they were obtained using formal differentiation, it is somewhat intuitive to see if functions obtained by integrating these solutions with respect to time over $(0,t)$ indeed solve the previous problem. 
The following lemma resolves the situation.

\begin{lemma}\label{lemma:inductive}
  Suppose $v$ is a solution to problem~\eqref{eq:problem_general_aux} with $k\in\N$ and $u_{k-1}\in V$. 
  Then $R_{u_{k-1}} v$ solves problem~\eqref{eq:problem_general_aux} with $k$ replaced by $k-1$.
\end{lemma}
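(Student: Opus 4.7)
The plan is to set $w := R_{u_{k-1}} v$, so that $w' = v$ and $w(0) = u_{k-1}$; with $v \in L^2(I; V) \cap H^1(I; H)$ and $u_{k-1} \in V$ this yields $w \in H^1(I; V) \cap H^2(I; H)$. The initial conditions for the $(k-1)$-auxiliary problem hold immediately: $w(0) = u_{k-1}$ by construction, and $(\mathcal C w')(0) = C(0)v(0) = C(0)u_k$ by the first initial condition in~\eqref{eq:problem_general_aux_ic}. What remains is to verify the differential equation~\eqref{eq:problem_general_aux_eq} with $k$ replaced by $k-1$.

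Denote by $L_k$ the left-hand-side operator of~\eqref{eq:problem_general_aux_eq} for index $k$; the hypothesis reads $L_k(v) = f^{(k)}$, and the goal is $L_{k-1}(w) = f^{(k-1)}$. The guiding principle is that the whole family of auxiliary problems arose by formal time differentiation, which strongly suggests the identity $(L_{k-1}(w))' = L_k(w') = L_k(v)$. Granted this, the residual $P := L_{k-1}(w) - f^{(k-1)}$ has vanishing weak time-derivative in $L^2(I; V^*)$, so it must coincide a.e.\ with a constant in $V^*$; identifying this constant as $0$ via the initial data will then complete the proof.

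To verify $(L_{k-1}(w))' = L_k(v)$, I would differentiate every summand of $L_{k-1}(w)$ via the generalized product rule (\autoref{lemma:product rule}), using the key identities
\begin{equation*}
  (\mathcal C w')' = \mathcal C v' + \mathcal C' v, \qquad \frac{d}{dt} \compose_{l=k-1-j}^{k-2} R_{u_l} w = \compose_{l=k-j}^{k-1} R_{u_l} v,
\end{equation*}
the second one simply because the outermost $R_{u_{k-1-j}}$ differentiates to its argument. The lower-order-in-$w$ contributions produced when the coefficient functions get differentiated are then combined with the reindexed derivative of the sum in $L_{k-1}$ (essentially $j \mapsto j+1$), and repeated use of Pascal's identity $\binom{k-1}{j-1} + \binom{k-1}{j} = \binom{k}{j}$ reassembles precisely the binomial coefficients appearing in $L_k$. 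For instance, the coefficient of $\mathcal C'' v$ adds up to $1 + (k-1) + \tfrac{(k-1)k}{2} = \tfrac{k(k+1)}{2}$, as required by the $k$-equation.

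To pin down the constant, evaluate at $t = 0$ using $w(0) = u_{k-1}$, $w'(0) = u_k$, $\compose_{l=k-1-j}^{k-2} R_{u_l} w|_{t=0} = u_{k-1-j}$, and $(\mathcal C w')'(0) = C'(0) u_k + C(0) u_{k+1}$ (the last summand coming from the initial condition $(\mathcal C v')(0) = C(0) u_{k+1}$ of the $k$-problem). Substituting into $L_{k-1}(w)(0) - f^{(k-1)}(0)$ reproduces precisely the recursion~\eqref{eq:def_initial_values} with $k$ replaced by $k-1$, which holds by the very definition of $u_{k+1}$. Hence $P(0) = 0$ and $P \equiv 0$. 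The principal obstacle is the algebraic bookkeeping of the previous step: tracking every lower-order contribution and confirming that the Pascal-identity combinations reassemble the $k$-equation coefficients with no leftover terms; the regularity of $v$ and of the coefficients, as inherited from the hypotheses of~\autoref{lemma:solvability_general_aux_form}, is sufficient to justify every application of the product rule.
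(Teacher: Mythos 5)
Your argument is correct and is essentially the paper's proof run in the opposite direction: the paper integrates the $k$-th equation $L_k(v)=f^{(k)}$ over $(0,t)$ and reassembles $L_{k-1}(w)=f^{(k-1)}$ directly, absorbing the boundary term at $t=0$ via the recursion~\eqref{eq:def_initial_values}, while you differentiate the residual $L_{k-1}(w)-f^{(k-1)}$ and pin down the resulting constant through that same recursion. The ingredients --- \autoref{lemma:product rule}, the index shift, Pascal's identity, and the evaluation at $t=0$ using~\eqref{eq:def_initial_values} --- coincide, so this is in substance the same proof.
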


\begin{proof}
  We set $w=R_{u_{k-1}}v$. 
  Clearly $w$ possesses the correct initial values since $w(0)=u_{k-1}$ and $w'(0)=v(0)=u_k$. 
  By a straightforward (albeit lengthy) computation we show that
  \begin{multline}\label{eq:lemma_general_inductive1}
    (\mathcal{C}w')' = f^{(k-1)} - ((k-1)\mathcal C'+\mathcal B)w'\\
      - \sum_{j=0}^{k-1} \left[{k-1 \choose j} (\mathcal A^{(j)}+\mathcal Q^{(j)}) + {k-1 \choose j+1} \mathcal B^{(j+1)} + {k \choose j+2}\mathcal C^{(j+2)}\right] \compose_{l=k-1-j}^{k-2} R_{u_l} w
  \end{multline}
  holds true in the $L^2(I; V^*)$ sense. 
  We start by employing the fundamental theorem of calculus to the sum in the preceding equation and compute
  \begin{multline}
    \sum_{j=0}^{k-1} \left[{k-1 \choose j}  (\mathcal A^{(j)}+\mathcal Q^{(j)}) + {k-1 \choose j+1}  \mathcal B^{(j+1)} + {k \choose j+2} \mathcal C^{(j+2)}\right] \left(\compose_{l=k-1-j}^{k-1} R_{u_l} v\right)(t) \\
    = \sum_{j=0}^{k-1} \left[{k-1 \choose j}  (A^{(j)}(0)+Q^{(j)}(0)) + {k-1 \choose j+1}  B^{(j+1)}(0) + {k \choose j+2} C^{(j+2)}(0)\right] u_{k-1-j} \\
    + \bigintsss_0^t \left(\sum_{j=0}^{k-1} \left[{k-1 \choose j} (\mathcal A^{(j)}+\mathcal Q^{(j)}) + {k-1 \choose j+1}  \mathcal B^{(j+1)} \right.\right.\\
     \left. \left. + {k \choose j+2} \mathcal C^{(j+2)}\right] \left(\compose_{l=k-1-j}^{k-1} R_{u_l} v\right)(s)\right)' \ds.
  \end{multline}
  Turning to~\eqref{eq:def_initial_values} shows that the expression before the integral can be rewritten using $C(0)u_{k+1}$. 
  Using~\autoref{lemma:product rule} we are able to recast the integral expression, too, and end up with
  \begin{multline}
    \sum_{j=0}^{k-1} \left[{k-1 \choose j}  (\mathcal A^{(j)}+\mathcal Q^{(j)}) + {k-1 \choose j+1}  \mathcal B^{(j+1)} + {k \choose j+2} \mathcal C^{(j+2)}\right] \left(\compose_{l=k-1-j}^{k-1} R_{u_l} v\right)(t) \\
      =-C(0)u_{k+1} + f^{(k-1)}(0) - (kC'(0)+B(0))u_k + \bigintsss_0^t \sum_{j=0}^{k-1} \left[{k-1 \choose j}  (\mathcal A^{(j+1)} \right.\\
       \left. + \mathcal Q^{(j+1)}) + {k-1 \choose j+1}  \mathcal B^{(j+2)} + {k \choose j+2} \mathcal C^{(j+3)}\right] \left(\compose_{l=k-1-j}^{k-1} R_{u_l} v\right)(s) \ds \\
      + \bigintsss_0^t \sum_{j=0}^{k-1} \left[{k-1 \choose j}  (\mathcal A^{(j)}+\mathcal Q^{(j)}) + {k-1 \choose j+1}  \mathcal B^{(j+1)} \right. \\
        \left. + {k \choose j+2} \mathcal C^{(j+2)}\right] \left(\compose_{l=k-j}^{k-1} R_{u_l} v\right)(s) \ds.
  \end{multline}
  Shifting the index of the sum in the first integral on the left-hand side to $j+1$, using an identity for binomial coefficients, and adding a zero then shows
  \begin{multline*}
    \sum_{j=0}^{k-1} \left[{k-1 \choose j}  (\mathcal A^{(j)}+\mathcal Q^{(j)}) + {k-1 \choose j+1}  \mathcal B^{(j+1)} + {k \choose j+2} \mathcal C^{(j+2)}\right] \left(\compose_{l=k-1-j}^{k-1} R_{u_l} v\right)(t) \\
    = -C(0)u_{k+1} + f^{(k-1)}(0) - (kC'(0)+B(0))u_k - \bigintsss_0^t (k\mathcal C''+\mathcal B')(v)(s) \ds \hspace{2.5cm}\\
    + \bigintsss_0^t \sum_{j=0}^{k} \left[{k \choose j}  (\mathcal A^{(j)}+\mathcal Q^{(j)}) + {k \choose j+1}  \mathcal B^{(j+1)} + {k+1 \choose j+2} \mathcal C^{(j+2)}\right] \left(\compose_{l=k-j}^{k-1} R_{u_l} v\right)(s) \ds,
  \end{multline*}
  if we combine the previous results. 
  Adding this to the remaining parts of~\eqref{eq:lemma_general_inductive1} lets us conclude
  \begin{multline*}
    \Big[f^{(k-1)}-((k-1)\mathcal C'+\mathcal B)w' - \sum_{j=0}^{k-1} \left[{k-1 \choose j} (\mathcal A^{(j)} \hfill\right. \\
        \quad\left.+\mathcal Q^{(j)}) + {k-1 \choose j+1} \mathcal B^{(j+1)} + {k \choose j+2}\mathcal C^{(j+2)}\right] \compose_{l=k-1-j}^{k-2} R_{u_l} w \Big](t) \hfill \\
    \qquad = C(0)u_{k+1} + C'(0)u_k + \bigintsss_0^t f^{(k)}(s) + (\mathcal C' v)'(s) - ((k\mathcal C' + \mathcal B) v)'(s) + ((k\mathcal C''+\mathcal B')v)(s) \ds \hfill \\
    \qquad\quad - \bigintsss_0^t \sum_{j=0}^{k} \left[{k \choose j}  (A^{(j)}(s)+Q^{(j)}(s)) + {k \choose j+1}  B^{(j+1)}(s) \right. \hfill \\
      \quad\left.+ {k+1 \choose j+2} C^{(j+2)}(s)\right] \left(\compose_{l=k-j}^{k-1} R_{u_l} v\right)(s) \ds \\
    \qquad = C(0)u_{k+1} + C'(0)u_k + \bigintsss_0^t (\mathcal C v')'(s) + (\mathcal C' v)'(s) \ds \hfill \\
    \qquad = C(0)u_{k+1} + C'(0)u_k + \bigintsss_0^t (\mathcal C v)''(s) \ds = (\mathcal Cv)'(t) = (\mathcal C w')'(t).\qedhere
  \end{multline*}
\end{proof}

Now to prove our main theorem we can use the results from before to make a compact induction argument.

\begin{theorem:main}
  Let $k\in\N\cup \{0\}$, $l=\max \{1,k\}$ and suppose that $A\in W^{k+1,\infty}(I; \mathcal L(V;V^*))$, $Q\in W^{l,\infty}(I; \mathcal L(V;H))$, $B\in W^{l,\infty}(I; \mathcal L(H))$, $C\in W^{k+1,\infty}(I; \mathcal L(H))$, as well as
  $f\in H^k(I; H)$ or $f\in H^{k+1}(I; V^*)$, and $u_j\in V$ for $j=0,\dots, k$ and $u_{k+1}\in H$.
  Then the unique solution $u$ of problem~\eqref{eq:problem_general} lies in $H^k(I; V)\cap H^{k+1}(I; H)$ with $(\mathcal C u^{(k+1)})' \in L^2(I; V^*)$ and satisfies the energy estimate
  \begin{equation}\tag{\ref{eq:theorem_main_energy}}
  \norm{u}_{W^{k,\infty}(I; V)}^2 + \tnorm{u^{(k+1)}}_{L^{\infty}(I; H)}^2 \leq \Lambda \left(\sum_{j=0}^k \norm{u_j}_V^2 + \norm{u_{k+1}}_H^2 + \norm{f}^2 \right),
  \end{equation}
  where $f$ is measured in either the $H^k(I; H)$- or the $H^{k+1}(I; V^*)$ norm and $\Lambda=\Lambda(k)$ is a constant depending continuously on $1/{c_0}$, $1/{a_0}$, $T$ and the operators $A, B, C, Q$, measured in the spaces above.
\end{theorem:main}

\begin{proof}
  Induction over $k\in\N\cup \{0\}$, where the hypotheses is that the assertion as given in the theorem holds for $k$ and that $u^{(k)}$ is the unique solution of~\eqref{eq:problem_general_aux}. \\[2ex]
  The case $k=0$ is covered by \autoref{theorem:well_posedness}. 
  We assume that the hypotheses holds for $k-1$ and the requirements for $k$ are met.
  Using \autoref{corollary:aux_solvable} we conclude that a solution $v\in L^2(I; V)\cap H^1(I; H)$ with $(\mathcal C v')' \in L^2(I; V^*)$ to the auxiliary problem~\eqref{eq:problem_general_aux}, which also fulfills the estimates from equation~\eqref{eq:aux_energy}, exists.
  Due to \autoref{lemma:inductive} we know that $R_{u_{k-1}} v$ satisfies~\eqref{eq:problem_general_aux} for $k-1$, which is uniquely solved by $u^{(k-1)}$.
  Therefore $v = (R_{u_{k-1}} v)'=u^{(k)}$ (which is unique) and $u^{(k)}$ satisfies the estimate~\eqref{eq:aux_energy}, which we add to the energy estimates for the case $k-1$ to obtain~\eqref{eq:theorem_main_energy}.
\end{proof}

\begin{remark}
  Note that in the case of $l=k=0$ the existence of a solution to~\eqref{eq:problem_general} still holds, but not its uniqueness.
\end{remark}

\subsubsection*{Acknowledgements}

The authors acknowledge funding of T.\ Gerken by the Deutsche Forschungsgemeinschaft (DFG, German Research Foundation) – Project number 281474342/GRK2224/1.

\addcontentsline{toc}{section}{References}
\printbibliography%

\end{document}